\newtheorem{Theo}{Theorem}
\newtheorem{Prop}{Proposition}
\newtheorem{Cor}{Corollary}
\newtheorem{Lem}{Lemma}
\newcommand{\Z}{\mathbb{Z}}
\newcommand{\N}{\mathbb{N}}
\newcommand{\R}{\mathbb{R}}
\begin{document}
\title{All large primes have property D}
\author[J.-C. Schlage-Puchta]{Jan-Christoph Schlage-Puchta}
\begin{abstract}
Let $p$ be a prime number, $a_1, a_2, \ldots a_{4p-4}$ a sequence of elements in $(\Z/p\Z)^2$, which does not contain a subsequence of length $p$ which adds up to 0. We show that if $p$ is sufficiently large, then the sequence contains exactly four different elements.
\end{abstract}
\maketitle

\section{Introduction and Results}

Let $(G, +)$ be a finite abelian group with neutral element $0$.. A sequence $a_1, \ldots, a_n$ of elements in $G$ is called a zero-sum sequence, if $a_1+\dots +a_n=0$. A sequence $a_1,\dots, a_n$ is called zero-sum free, if there is no subsequence $a_{i_1}, a_{i_2}, \ldots, a_{i_k}$, $1\leq i_1<i_2<\dots<i_k\leq n$, which is a zero-sum sequence. The basic questions surrounding zero-sums are on one hand to determine the least integer $n$, such that every sequence of length $n$ contains a zero-sum, or the least integer $n$ such that every sequence contains a zero-sum subsequence with some restriction on the length of the sub-sequence. On the other hand we have the inverse questions, that is, given a zero-sum free sequence  of maximal length, can we determine the structure of the sequence?

For an integer $n$ and $G=(\Z/n|Z)^2$, the systematic investigation of inverse questions began with the work of Gao and Geroldinger \cite{GG}. They defined that an integer $n$ has
\begin{itemize}
\item {\em Property B}, if every zero-sum free sequence of length $2n-2$ in $(\Z/n|Z)^2$ contains an element with multiplicity at least $p-2$;
\item {\em Property C}, if every sequence of length $3n-3$ in $(\Z/n\Z)^2$, that does not contain a zero-sum subsequence of length $\leq p$, is of the form $a^{p-1}b^{p-1}c^{p-1}$;
\item {\em Property D}, if every sequence of length $4n-4$ in $(\Z/n\Z)^2$, that does not contain a zero-sum subsequence of length $p$, is of the form $a^{p-1}b^{p-1}c^{p-1}d^{p-1}$;
\end{itemize}
and showed that Property C and D are multiplicative, that is, if $n$ and $m$ have this property, then so does $nm$. In particular, it suffices to check these properties for prime numbers.

There has been a lot of research around these properties, which we will not summarize here. We only mention the following milestones: Reiher \cite{Reiher1} proved Kemnitz' conjecture, that is, every every sequence of length $4p-3$ in $(\Z/p\Z)^2$ contains a zero-sum of length $p$,. In other words, property D at least predicts the correct maximal length of a sequence without a zero-sum of length $p$. Gao, Geroldinger and Grynkiewicz \cite{Inverse III} showed that Property B is multiplicative. Gao and Geroldinger \cite{GG0} showed that property B implies property C. Reiher \cite{Reiher2} showed that all prime numbers satisfy property B, taken together these results imply that all integers have property B and C.

The goal of this note is to prove the following.
\begin{Theo}
\label{thm:main}
There exists a constant $p_0$, such that every prime number $p>p_0$ satisfies property D.
\end{Theo}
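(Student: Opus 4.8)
The plan is to isolate the single hard point and reduce everything else to Property C, which by the results quoted above is already known for all primes. First I would record the translation invariance of the problem: replacing every $a_i$ by $a_i-g$ for a fixed $g$ leaves the collection of zero-sum subsequences of length exactly $p$ unchanged, since subtracting $g$ from $p$ terms alters any subsum by $pg=0$. In particular every value occurs at most $p-1$ times (otherwise $p$ equal terms already form a zero-sum of length $p$), and I may freely translate so that a most frequent value becomes $0$.

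The key reduction is that it suffices to prove that the maximal multiplicity $h(S)$ of the sequence $S=(a_1,\dots,a_{4p-4})$ equals $p-1$. Indeed, translate so that $0$ attains this multiplicity, and let $T$ be the subsequence of the $3p-3$ nonzero terms. Any zero-sum of length $p$ in $S$ arises by adjoining $j$ copies of $0$ (with $0\le j\le p-1$) to a zero-sum of length $p-j$ in $T$; hence the absence of a length-$p$ zero-sum in $S$ forces $T$ to contain no zero-sum subsequence of length $\le p$. Property C, valid for all primes, then gives $T=a^{p-1}b^{p-1}c^{p-1}$ with $a,b,c$ distinct and nonzero, so $S=0^{p-1}a^{p-1}b^{p-1}c^{p-1}$ consists of exactly four values, each of multiplicity $p-1$, as required. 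Thus the whole theorem rests on the multiplicity bound $h(S)\ge p-1$.

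To prove $h(S)\ge p-1$ I would argue by contradiction and show, for $p$ large, that a spread-out sequence (every value of multiplicity $\le p-2$, hence at least five distinct values) must already contain a zero-sum of length $p$. I would count such subsequences by additive characters: writing $e_p(u)=e^{2\pi i u/p}$ and letting $N$ be the number of length-$p$ zero-sum subsequences,
\[
N=\frac{1}{p^2}\sum_{t\in(\Z/p\Z)^2}[x^p]\prod_{i=1}^{4p-4}\bigl(1+x\,e_p(t\cdot a_i)\bigr).
\]
The term $t=0$ is the main term $\binom{4p-4}{p}$, which is enormous. For $t\ne 0$ the inner coefficient factors through the distribution of the values $t\cdot a_i\in\Z/p\Z$, and a saddle-point (or contour) estimate at radius $r\approx 1/3$ bounds it by a product $\prod_v\lvert 1+r\,e_p(v)\rvert^{m_v}$, where $m_v$ counts the $a_i$ with $t\cdot a_i=v$. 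Since $\lvert 1+r\,e_p(v)\rvert$ is strictly less than $1+r$ except when $v\equiv 0$, such a term is exponentially smaller than the main term whenever the phases $t\cdot a_i$ are spread over $\Z/p\Z$; as there are only $p^2$ frequencies, these ``minor arc'' contributions are together negligible once $p$ is large.

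The main obstacle is the ``major arcs'': a frequency $t$ for which the phases $t\cdot a_i$ cluster near $0\bmod p$ produces an error term comparable in size to the main term, and this clustering is exactly what the extremal configuration $0^{p-1}e_1^{p-1}e_2^{p-1}(e_1+e_2)^{p-1}$ exhibits, since for $t=(0,1)$ the phases take only the values $0$ and $1$. Controlling these requires a stability argument: clustering of $t\cdot a_i$ near $0$ forces the corresponding $a_i$ to lie near a one-dimensional coset, so I would reduce the concentrated case to a one-dimensional zero-sum count, where the sharp Erd\H{o}s--Ginzburg--Ziv structure is available, and then combine the few relevant directions to pin down the global distribution. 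Carrying out this rigidity analysis precisely enough to conclude that any configuration falling short of maximal multiplicity $p-1$ still leaves $N>0$ is where the real work lies, and it is precisely the polynomial loss from the $p^2$ frequencies against the exponential gain on the minor arcs that makes the hypothesis that $p$ be large indispensable.
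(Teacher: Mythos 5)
Your opening reduction is correct, and it is essentially how the paper itself begins: translation by a fixed $g$ preserves zero-sums of length exactly $p$, and once some element attains multiplicity $p-1$, Property C (known for all primes by Gao--Geroldinger and Reiher) applied to the remaining $3p-3$ terms forces the shape $0^{p-1}a^{p-1}b^{p-1}c^{p-1}$. The paper's first lemma in fact extracts more from Property C: if no element has multiplicity $p-1$, then \emph{every} multiplicity is at most $p/2$, and this stronger dichotomy (not merely $\leq p-2$) is what its subsequent arguments consume.

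The gap is everything after that, i.e., the entire content of the theorem: the claim that a sequence of length $4p-4$ with all multiplicities $\leq p-2$ must contain a zero-sum of length $p$. Your treatment of this is a plan whose decisive step you yourself defer (``where the real work lies''), and moreover the plan cannot be completed in the form you set it up. The proposed strategy is to show that the main term $\binom{4p-4}{p}$ dominates the $t\neq 0$ error terms, at least after a structural analysis of the clustered frequencies; but no such domination holds uniformly over your hypothesis class. For the extremal sequence $0^{p-1}e_1^{p-1}e_2^{p-1}(e_1+e_2)^{p-1}$ one has $N=0$, so the error terms cancel the main term \emph{exactly}; and there are admissible configurations (all multiplicities $\leq p-2$, obtained by lowering four multiplicities to $p-2$ and inserting four bounded extra elements) for which $N$ is positive but only polynomial in $p$, while the main term is exponential in $p$. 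For such configurations the inequality ``error terms smaller than main term'' is simply false, so no refinement of the character-sum estimates can establish $N>0$ this way: near the major arcs the problem is structural, and the structure theorem needed is essentially Property D itself. This missing content is what the paper supplies, by a different route: Alon--Dubiner-type Fourier arguments showing that after a linear change of variables all but $\epsilon p$ elements lie in a fixed box $[-C,C]^2$; a continuous ``real version'' of Property D, proved by convex geometry, which produces a lattice point with a whole ball around it inside the set of real subsequence sums of length $1$; and a final approximation step converting that ball into an exact zero-sum of length $p$. Your sketch of reducing clustered directions to one-dimensional Erd\H{o}s--Ginzburg--Ziv structure and ``combining the few relevant directions'' restates this difficulty rather than resolving it.
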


The proof falls into two main parts and a short conclusion. In the first part we show that if $A$ is a sequence of length $4p-4$ without a zero-sum subsequence  of  length $p$, then, after a suitable change of variables  all but a tiny fraction of the elements of the sequence have coordinates of absolute value bounded by some constant $C$, independently of $p$. This part of the proof uses methods from Fourier theory, in particular techniques used by Alon and Dubiner \cite{AD} and exponential sums. The second part uses idea from convex geometry to show that a  real approximation of the problem is solvable. Finally, we show that the approximation is good enough to yield a proof of the full problem.

A central idea that turns the problem into a combinatorial search over a finite set of instances is taken from \cite{Numerik}, where Property B was established for all $p\leq 23$. One method to reduce the computational effort was the following: The set of sums of subsequences of $x-y\;x\;x+y$ is at least as large as the set of sums of subsequences of $x^3$, hence, if we want to check whether a certain sequence contains a zero-sum subsequence, we can replace all occurrences of triples $x-y\;x\;x+y$ by $x^3$. When looking for sequences with a given structure such that there is no zero-sum subsequence, or the set of subsequence sums is in other ways restricted, this argument greatly helps to prune the search tree.

We would like to do the same for elements $x+y\;x-y$. However, the sequence $x^2$ now has the subsequence sums $\{1, x, 2x\}$, and in general $x$ is not a subsequence sum of $x+y\;x-y$. If we replace the sequence $(x+y)^k\;(x-y)^k$ by $x^{2k}$, the same problem arises. However, if we can represent $x$ by some other means as a subsequence sum, we can almost remedy this, so the subsequence sums of $(x+y)^k\;(x-y)^k$ are almost a superset of the subsequence sums of $x^{2k}$. To give an exact meaning to ``almost", in Section~\ref{sec:geo} we introduce real sequences, that is, sequences where multiplicities are non-negative real numbers. We obtain a geometric problem that we dubbed the real version of Property~D, see Proposition~\ref{Prop:Dreal}. 

\section{Fourier theoretic arugments}
\label{sec:Fourier}

Our argument uses a sequence of approximations. In each step we reduce the sequence a bit. As there are sequences of length $4p-6$ in $(\Z/p\Z)^2$ without zerosums of length $p$, where all multiplicities are $\leq p-2$, this approach could at best show that some multiplicity is pretty large. Fortunately, this is sufficient in view of the following.
\begin{Lem}
Let $p$ be a prime number which has property C, and let $A$ be a sequence of length $4p-4$ that does not contain any zero sum subsequence of length $p$. Then either $A$ contains an element with multiplicity $p-1$ or all elements in $A$ have multiplicity at most $\frac{p}{2}$.
\end{Lem}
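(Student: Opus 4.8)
The plan is to argue by contradiction, exploiting the translation invariance of the forbidden configuration and then reducing to Property~C. Since $p\cdot t=0$ for every $t\in(\Z/p\Z)^2$, a subsequence of length $p$ sums to $0$ in $A$ if and only if it does after translating $A$ by any fixed $t$; hence the property ``$A$ has no zero-sum subsequence of length $p$'' is translation invariant, and multiplicities are unchanged, so I may assume that the element of maximal multiplicity $m$ is $0$, writing $A=0^m B$ with $0$ not occurring in $B$. The whole statement is equivalent to showing $m\notin(p/2,p-2]$, so I assume $p/2<m\le p-2$ and seek a contradiction. The first observation is that $B$ contains no zero-sum subsequence whose length $\ell$ lies in the window $[p-m,p]$: such a subsequence could be padded with $p-\ell\le m$ of the available zeros to produce a zero-sum of length exactly $p$ in $A$.

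Call a zero-sum subsequence of $B$ \emph{short} if its length is at most $p-m-1$ (note $p-m-1<p/2$). I would extract a maximal family $T_1,\dots,T_r$ of pairwise disjoint short zero-sums and let $R$ be the remaining subsequence. By maximality $R$ has no short zero-sum, and by the previous paragraph it has no zero-sum of length in $[p-m,p]$; together these say that $R$ has no zero-sum subsequence of length $\le p$ at all. Set $\sigma=|T_1|+\dots+|T_r|$, so that $|R|=(4p-4-m)-\sigma$, and split the argument according to the size of $\sigma$.

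If $\sigma\le p-m-1$, then $|R|\ge 3p-3$, so $R$ contains a subsequence $R'$ of length exactly $3p-3$, which still has no zero-sum of length $\le p$; Property~C then forces $R'=a^{p-1}b^{p-1}c^{p-1}$, producing an element of multiplicity $p-1>m$ in $A$ and contradicting the maximality of $m$. If instead $\sigma\ge p-m$, I consider the cumulative sums $s_j=|T_1|+\dots+|T_j|$, which are the lengths of the zero-sum subsequences $T_1\cup\dots\cup T_j$ of $B$ and hence must avoid $[p-m,p]$. Each increment $|T_j|$ is at most $p-m-1$, so if $j$ is minimal with $s_j\ge p-m$ then $s_{j-1}\le p-m-1$ and thus $s_j\le 2(p-m)-2<p$, where the last inequality uses $2(p-m)<p$, i.e.\ $m>p/2$; this places $s_j$ inside the forbidden window $[p-m,p]$, a contradiction. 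I expect the delicate point to be exactly this last inequality: the hypothesis $m>p/2$ is precisely what makes the window $[p-m,p]$ wider than the maximal admissible step $p-m-1$, so the cumulative sums cannot jump across it. Everything else is bookkeeping once the reduction to $0^m B$ and the short/long case split are in place.
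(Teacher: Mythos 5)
Your proof is correct, and it rests on the same three ingredients as the paper's: translate so that the element of maximal multiplicity $m>p/2$ is $0$ and write $A=0^mB$; observe, by padding with zeros, that $B$ has no zero-sum subsequence of length in the window $[p-m,p]$; and apply Property~C to a long zero-sum-free remainder to manufacture an element of multiplicity $p-1$, contradicting $m\le p-2$. The difference lies in how the contradiction is organized. The paper extracts a single zero-sum $Z_1$ of maximal length among those of length $\le p-1$ in $B$ (padding forces $m+|Z_1|<p$, so $|Z_1|<p-m$), deletes it, notes the remainder still has length $\ge 3p-3$ and so contains a second zero-sum $Z_2$ of length $\le p-1$; maximality of $Z_1$ and the absence of length-$p$ zero-sums force $|Z_1|+|Z_2|>p$, hence $|Z_1|>p/2$, contradicting $|Z_1|<p-m<p/2$. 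You instead take a maximal disjoint family of ``short'' zero-sums (length $\le p-m-1$) and split on its total length $\sigma$: for $\sigma\le p-m-1$ the remainder has length $\ge 3p-3$ and Property~C applies, while for $\sigma\ge p-m$ your cumulative-length argument shows that steps of size $\le p-m-1$ cannot jump over the window $[p-m,p]$ --- the same use of $m>p/2$ that drives the paper's two-zero-sum comparison, repackaged as a greedy no-crossing argument. Neither route is more general; the paper's is more economical (two extractions suffice), yours makes the forbidden-window structure fully explicit and is arguably more transparent. One precision point, which the paper elides in exactly the same way: your claimed equivalence of the lemma with $m\notin(p/2,p-2]$ (and the paper's assertion $|A_1|>3p-3$) needs the one-line remark that $m\ge p$ is impossible, since $p$ copies of a single element already form a zero-sum of length $p$.
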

\begin{proof}
Property $D$ trivially holds for $p=2$, so we may assume that $p$ is odd.
Let $A$ be a sequence of length $4p-4$, which does not contain a zero-sum of length $p$, contains no element with multiplicity $p-1$, and contains an element with multiplicity $>\frac{p}{2}$. 
As $p$ has property $C$, we conclude that every subsequence of length $\geq 3p-3$ contains a zero-sum of length $\leq p$. As $A$ contains no zero-sum subsequence of length $p$, we even have that there exists a zero-sum of length $\leq p-1$. If $Z_1$ has length

We may assume without loss of generality that $(0,0)$ occurs with multiplicity $m>\frac{p}{2}$. Let $A_1$ be the sequence obtained from $A$ by deleting all occurrences of $(0,0)$. Then $A_1$ has length larger than $3p-3$, hence, $A$ contains zero-sum subsequences of length $\leq p-1$. Pick one such sequence $Z_1$ of maximal length. If $m+|Z_1|\geq p$, then we can form $Z_1(0,0)^{p-|Z_1|}$ and obtain a zero-sum of length $p$. If $m+|Z_1|<p$, we form $A_2=A_1\setminus Z_1$. Then 
\[
|A_2|=|A|-(m+|Z_1|)\geq 4p-4 - (p-1) = 3p-3,
\] 
hence, $A_2$ contains a zero-sum subsequence of length $\leq p-1$, pick one such sequence $Z_2$ of maximal length. Then $Z_1Z_2$ is a zero-sum subsequence, as $Z_1$ had maximal length among all sequences of length $\leq p-1$, we obtain $|Z_1Z_2|>p$. But then $|Z_1|>\frac{p}{2}$, and therefore
\[
p=\frac{p}{2}+\frac{p}{2} < m+|Z_1|<p,
\]
which is absurd.
\end{proof}
For the remainder of this section we therefore fix a sequence $A$ of length $4p-4$ without a zero-sum of length $p$ and all multiplicities $<\frac{p}{2}$.

Our first result is essentially contained in the work of Alon-Dubiner \cite{AD}.

\begin{Lem}
\label{Lem:AD}
For every $\epsilon>0$ and $d\in\N$ there exists some $W$ such that for all sequences $A$ over $(\Z/p\Z)^d$ of length $\geq \epsilon p$, such that no hyperplane contains more than $\frac{\epsilon p}{W}$ elements of the sequence, we have that all elements of $(\Z/p\Z)^d$ can be represented as the sum of a subsequence of $A$ of length $\lceil\epsilon p/2\rceil$.
\end{Lem}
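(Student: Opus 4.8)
The plan is to count, for each $g\in(\Z/p\Z)^d$, the number $N_\ell(g)$ of length-$\ell$ subsequences summing to $g$ with $\ell=\lceil\epsilon p/2\rceil$, and to show $N_\ell(g)>0$ for every $g$. First I would reduce to the case $n:=|A|=\lceil\epsilon p\rceil$: any sub-collection $A'\subseteq A$ is at least as spread out (each hyperplane meets $A'$ in no more points than it meets $A$), a length-$\ell$ representation inside $A'$ is one inside $A$, and this reduction puts $\ell$ close to $n/2$. Writing $\omega=e^{2\pi i/p}$ and expanding the indicator of $\sum_{i\in S}a_i=g$ over the characters of $(\Z/p\Z)^d$ gives the basic identity
\[
N_\ell(g)=\frac{1}{p^{d}}\sum_{t\in(\Z/p\Z)^{d}}\omega^{-\langle t,g\rangle}E_t,\qquad E_t=\sum_{|S|=\ell}\prod_{i\in S}\omega^{\langle t,a_i\rangle}=[z^{\ell}]\prod_{i=1}^{n}\bigl(1+z\,\omega^{\langle t,a_i\rangle}\bigr).
\]
Since $E_0=\binom{n}{\ell}$ is the main term, it suffices to prove $\sum_{t\neq0}|E_t|<\binom{n}{\ell}$.

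To estimate $E_t$ I would extract the coefficient by a contour integral on the circle $|z|=r$ with $r=\ell/(n-\ell)\approx1$, so that $E_t=\frac{1}{2\pi r^{\ell}}\int_{-\pi}^{\pi}e^{-i\ell\theta}\prod_i(1+re^{i\theta}\omega^{\langle t,a_i\rangle})\,d\theta$. For $t=0$ this is a textbook saddle-point computation returning $\binom{n}{\ell}\sim 2^{n}/\sqrt{n}$, and the radius $r$ is chosen precisely so that the phase is stationary at the point where the modulus of the integrand peaks. The essential gain for $t\neq0$ comes from the hyperplane hypothesis: no residue $c$ is hit by more than $\epsilon p/W$ of the values $\langle t,a_i\rangle$, so the phases $\langle t,a_i\rangle$ are spread over many residues. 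The key structural fact I would exploit is that for $t\neq0$ the point maximising $\bigl|\prod_i(1+re^{i\theta}\omega^{\langle t,a_i\rangle})\bigr|$ is \emph{displaced} from the stationary point of the total phase — the two coincide only when $t=0$, because $r$ was tuned to $t=0$ — and this displacement forces genuine oscillation, hence cancellation, in the integral.

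Following Alon–Dubiner, I would turn this into a uniform bound $|E_t|\le\binom{n}{\ell}\,e^{-c(W)n}$ valid for all $t\neq0$ and all admissible spread-out $A$, where $c(W)>0$ once $W$ is taken large enough in terms of $\epsilon$ and $d$. Granting this, and using $n\ge\epsilon p$,
\[
\sum_{t\neq0}|E_t|\le p^{d}\binom{n}{\ell}e^{-c(W)n}\le\binom{n}{\ell}\,p^{d}e^{-c(W)\epsilon p}<\binom{n}{\ell}
\]
for all sufficiently large $p$, whence $N_\ell(g)>0$ for every $g$ and the lemma follows.

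The hard part is precisely the uniform coefficient bound. The delicate regime is when $A$ is concentrated on a bounded number of parallel hyperplanes $\langle t,x\rangle=c$, which the per-hyperplane condition still permits: there the modulus of the integrand alone gives \emph{no} saving over the main term, and the entire gain must be extracted from the oscillation of the factor $e^{-i\ell\theta}$ through the displaced-saddle phenomenon above. This is also where the size of $W$ enters essentially rather than cosmetically: if $W$ is too small (roughly $W\lesssim 2/\epsilon$), length-$\ell$ sums cannot reach every residue in the direction $t$, the lemma is simply false, and correspondingly $c(W)\le0$; the quantitative stationary-phase analysis must locate the threshold beyond which $c(W)>0$. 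Making this saving explicit and uniform is exactly the technical content I would import from \cite{AD}.
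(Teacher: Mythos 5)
Your proposal contains a fatal gap: the intermediate statement you declare sufficient, namely $\sum_{t\neq0}|E_t|<\binom{n}{\ell}$, and the uniform bound $|E_t|\le\binom{n}{\ell}e^{-c(W)n}$ that you propose to import from \cite{AD}, are both \emph{false} for admissible sequences, no matter how large $W$ is taken. Concretely, place $n=\lceil\epsilon p\rceil$ elements as evenly as possible on the box $\{0,1,\dots,W\}^d\subseteq(\Z/p\Z)^d$. Any affine hyperplane meets the box in at most $(W+1)^{d-1}$ of its points, hence contains at most roughly $n/(W+1)<\epsilon p/W$ elements for $p$ large, so the hypothesis of the lemma holds. (The example is consistent with the lemma itself: for $W\gg 1/\epsilon$ the attainable integer sums wrap around modulo $p$; but they do so very non-uniformly, which is the point.) Now take $t=(t_1,0,\dots,0)$ with $t_1\neq 0$ small. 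For an $\ell$-subset $S$, the first coordinate of $\sum_{i\in S}a_i$, computed in $\Z$, is a sample sum without replacement of $\ell$ values from $\{0,\dots,W\}$; by Hoeffding's inequality at least $90\%$ of all $\ell$-subsets give a value in a fixed interval $I$ of length $O(W\sqrt{n})=O(W\sqrt{\epsilon p})\ll p$. Hence for every $t_1$ with $0<|t_1|\le p/(10|I|)$, the phases $\omega^{\langle t,\sum_{i\in S}a_i\rangle}$ of the bulk of the subsets lie in an arc of angle at most $\pi/5$, which forces
\[
|E_t|\ \ge\ \tfrac{3}{4}\binom{n}{\ell}\quad\text{for at least }c(\epsilon,W)\sqrt{p}\text{ nonzero values of }t,
\qquad\text{whence}\qquad
\sum_{t\neq0}|E_t|\ \gg\ \sqrt{p}\,\binom{n}{\ell}.
\]
So the inequality you reduce the lemma to is not merely hard; it fails by a factor growing polynomially in $p$.

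You do flag exactly this concentrated regime as the delicate one, but the rescue you propose there cannot exist: $E_t$ is a fixed complex number, the contour integral \emph{equals} it, and no amount of stationary-phase cleverness can make a number of modulus $\asymp\binom{n}{\ell}$ small. The structural reason is that the lemma is a covering statement, whereas your argument attempts to prove equidistribution of $N_\ell(g)$, and equidistribution is genuinely false for admissible sequences: in the box example, residues near the mean receive a factor of order $\sqrt{p}$ more than the average count $\binom{n}{\ell}/p^d$, so \emph{any} argument that applies the triangle inequality to all nontrivial Fourier coefficients is a dead end. For comparison, the paper offers no proof of this lemma at all (it is quoted as essentially contained in \cite{AD}), and both \cite{AD} and the paper's own related arguments are organized around precisely the dichotomy your approach blurs: if some direction admits a small linear relation (the concentrated case), one argues via long integer intervals that wrap around modulo $p$ — this is what happens in Lemmas~\ref{Lem:line 3/2} and~\ref{Lem:strip}, where exponential-sum estimates are applied only \emph{after} such relations have been excluded — while in the genuinely spread case one uses expansion/sumset-growth arguments. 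What would have to be imported from \cite{AD} is that case analysis, not a uniform exponential bound on $|E_t|$; no such bound exists.
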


\begin{Lem}
\label{Lem:line 3/2}
Let $B$ be a sequence over $\Z/p\Z$ of length $\geq \frac{3}{2}p$ and maximal mupltipicity $\leq \frac{p}{2}$. Then every element of $B$ is the sum of a subsequence of length $p$.
\end{Lem}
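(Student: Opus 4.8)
The plan is to work entirely inside the one-dimensional group $\Z/p\Z$ and to reduce the statement to a lower bound for the cardinality of a set of fixed-length subsequence sums. First I would record two symmetries of the set $\Sigma_p(B)$ of sums of length-$p$ subsequences of $B$. Since $p\cdot c\equiv 0\pmod p$, replacing every term of $B$ by its translate $b\mapsto b-c$ leaves $\Sigma_p(B)$ unchanged; hence I may assume that the most frequent value of $B$ is $0$, occurring with multiplicity $h\le \tfrac p2$. Secondly, choosing a length-$p$ subsequence is the same as discarding a subsequence of length $|B|-p$, so $\Sigma_p(B)=\sigma(B)-\Sigma_{|B|-p}(B)$, where $\sigma(B)$ is the total sum; this lets me assume the target length is at most $p$. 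Deleting the $h$ zeros and padding an arbitrary subsequence of the nonzero part $B'$ with the appropriate number of zeros, the claim becomes: \emph{every $t\in\Z/p\Z$ is the sum of a subsequence of $B'$ whose length lies in the window $[\,p-h,\;p\,]$}, where $|B'|=|B|-h\ge p$ and every value of $B'$ still has multiplicity at most $h$.

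The coverage of the group, \emph{ignoring} the length, is immediate: writing the distinct nonzero values of $B'$ as $v_1,\dots,v_m$ with multiplicities $h_1,\dots,h_m$, each factor $\{0,v_j,2v_j,\dots,h_jv_j\}$ contributes $h_j+1$ residues, so by the Cauchy--Davenport theorem the full set of subsequence sums of $B'$ has size at least $\min\bigl(p,\;1+\sum_j h_j\bigr)=\min(p,\,|B'|+1)=p$, i.e.\ it is all of $\Z/p\Z$. The real task is to realise each $t$ with the length pinned inside the window. I would do this by applying a length-sensitive addition theorem, namely a Cauchy--Davenport/Kneser-type bound for the set $\Sigma_\ell(B')$ of subsequence sums of a \emph{fixed} length $\ell$. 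Because $\Z/p\Z$ has no proper nontrivial subgroup, the only subgroup obstruction in such a theorem is trivial, and the lower bound for $\bigcup_{\ell=p-h}^{p}\Sigma_\ell(B')$ reduces to a pure cardinality count. The arithmetic to verify is that the window width $h+1$, together with $|B'|\ge p$ and all multiplicities $\le h$, forces this count to reach $p$; the multiplicity bound $h\le \tfrac p2$ is exactly what makes the length deficiency $|B'|-\ell$ absorbable inside a window of width $h+1$.

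The main obstacle is precisely this length constraint: the unrestricted statement is trivial, whereas forcing the subsequence to have length in the window is where both hypotheses are consumed. The multiplicity bound cannot be dropped, as the degenerate sequence $0^{p-1}1^{p-1}$ shows — here every length-$p$ subsum equals a value $j\in\{1,\dots,p-1\}$, so $0\notin\Sigma_p$ even though the length is large, the obstruction being a value of multiplicity close to $p$. At the other extreme the hypotheses are tight: for the near-extremal sequence built from three values, each of multiplicity about $\tfrac p2$, of total length $\tfrac32 p$, one checks by hand that the length-$p$ subsums sweep out a full interval of residues and thus cover $\Z/p\Z$, so no slack remains. A self-contained substitute could instead pair the terms: since $h\le\tfrac p2<\tfrac{|B|}{2}$ one can split $B$ into $\lfloor |B|/2\rfloor$ disjoint pairs of distinct elements and apply Cauchy--Davenport to the two-element ``choose one'' factors, but selecting exactly $p$ terms forces ``both'' or ``neither'' choices on some pairs and stalls the sumset near $\tfrac p2$ residues. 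Overcoming this $\tfrac p2$ barrier — that is, converting the surplus of distinct values guaranteed by $h\le\tfrac p2$ into extra coverage while keeping the length fixed at $p$ — is the step I expect to require the sharp length-sensitive addition theorem rather than a bare pairing argument.
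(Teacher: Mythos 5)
Your reductions (translating so the most frequent value is $0$, the complement identity $\Sigma_p(B)=\sigma(B)-\Sigma_{|B|-p}(B)$, and the reformulation as a window condition on the zero-free part $B'$) are all correct, but the proposal has a genuine gap at its center: the ``length-sensitive addition theorem'' that is supposed to push the count for $\bigcup_{\ell=p-h}^{p}\Sigma_\ell(B')$ up to $p$ is never named, and with the natural candidate --- the DeVos--Goddyn--Mohar (Kneser-type) theorem for fixed-length subsequence sums --- the arithmetic does not close. That theorem gives, when the stabilizer is trivial, $|\Sigma_\ell(S)|\geq 1-\ell+\sum_{g}\min(\ell,m_g)$. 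Applied to your union, i.e.\ to $\Sigma_p(B'0^h)=\bigcup_{\ell=p-h}^{p}\Sigma_\ell(B')$ with the target length $p$, every multiplicity satisfies $m_g\leq p/2\leq p$, so the bound is $1-p+|B|\geq\frac{p}{2}+1$ --- precisely the ``$p/2$ barrier'' you name in your final paragraph and leave unresolved, merely expressing the expectation that some sharp theorem will cross it. Since this is exactly the step where both hypotheses must be consumed, what you have is a plan, not a proof.

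The irony is that you already wrote down the tool that closes the gap, the complement identity; you just apply the addition theorem on the wrong side of it. Trim $B$ to length exactly $\lceil 3p/2\rceil$ (any length-$p$ sum of a subsequence is a length-$p$ sum of $B$), keep the zeros in, and apply the DGM bound to the complementary length $\ell=|B|-p=\lceil p/2\rceil$. Now no multiplicity is truncated, since $m_g\leq p/2\leq\ell$, so the trivial-stabilizer bound reads $1-\ell+\sum_g m_g=1-(|B|-p)+|B|=p+1>p$, which is impossible; hence the stabilizer of $\Sigma_{|B|-p}(B)$ is all of $\Z/p\Z$, this set is a nonempty union of cosets, and therefore $\Sigma_{|B|-p}(B)=\Z/p\Z$. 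Reflecting, $\Sigma_p(B)=\sigma(B)-\Sigma_{|B|-p}(B)=\Z/p\Z$. The point you missed is that the DGM bound improves as $\ell$ decreases so long as $\ell$ still dominates every multiplicity, so one must count the discarded terms rather than the kept ones. Completed this way, your argument would actually be shorter and more elementary than the paper's proof, which instead invokes the Alon--Dubiner theorem to handle the case of many low-multiplicity elements and then runs a structural case analysis (either a high-multiplicity element has well-spread multiples, or after a change of variables the sequence concentrates on a bounded integer interval, where the length-$p$ sums form a long interval).
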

\begin{proof}
It follows from Lemma~\ref{Lem:AD} that there exists some $\delta>0$ such that our claim holds provided that the subsequence of $B$ consisting of all elements of $B$ with multiplicity $<\delta p$ has length at least $0.01p$. Now suppose that 0, 1 and $t$ occur with multiplicity $\geq\delta p$, and $t$ cannot be represented by a fraction with numerator and denominator of absolute value $\leq \frac{4}{\delta}$. Then the residues $0\bmod p, t\bmod p, 2t\bmod p, \ldots, \lfloor\frac{\delta p}{2}\rfloor t\bmod p$ partition $\{0, \ldots p\}$ into intervals, none of which is longer than $\frac{\delta}{2}p$. Hence, every element of $\Z/p\Z$ can be represented as the sum of a subseqeunce of $B$ of length $2\lfloor\frac{\delta p}{2}\rfloor$, and our claim follows. If this is not the case, then there is some $C$ such that after a suitable linear transformation all but $0.01p$ of all elements of $B$ occur with multiplicity $\geq\delta p$ and are in the interval $[-C, C]$. We may further assume that these elements, when viewed as elements of $\Z$, have no non-trivial common divisor. We conclude that when viewed as a subset of $\Z$, the set of all elements that can be represented differs from the interval $I=[a,b]$ starting at the smallest integer $a$ representable as such a subsequence sum going to the largest integer $b$ representable by a finite set of elements contained in $[a, a+M]\cup[b-M, b]$, where $M$ is some constant depending on $C$, and ultimately depending on nothing at all.

The fact that there are at least $1.49$ elements in $[-C, C]$, and no element occurs with multiplicity $>\frac{p}{2}$ no implies that these elements represent an interval of length at least $0.98p$. Furthermore, $b-a>1.01p$, unless there are two consecutive elements with multiplicity $\geq 0.47p$. But then the set of all integers representable as subsequence sums of length $p$ are actually the full interval $I$, and our claim follows.
\end{proof}
\begin{Cor}
\label{Cor:lines thin}
No line contains $\frac{3p}{2}$ elements of $A$.
\end{Cor}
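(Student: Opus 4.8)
The plan is to prove Corollary~\ref{Cor:lines thin} by contradiction, deriving a zero-sum subsequence of length exactly $p$ from the assumption that some line $\ell$ carries at least $\frac{3p}{2}$ elements of $A$. The natural approach is to combine the one-dimensional result of Lemma~\ref{Lem:line 3/2} along the line $\ell$ with the recovery of the ``missing'' coordinate using the elements of $A$ off the line.

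First I would observe that since all multiplicities in $A$ are $<\frac{p}{2}$, the restriction $B$ of $A$ to $\ell$ is a sequence over a coset of a one-dimensional subgroup with length $\geq\frac{3}{2}p$ and maximal multiplicity $\leq\frac{p}{2}$. After translating so that $\ell$ passes through $0$ and choosing coordinates so that $\ell$ is the first coordinate axis, $B$ becomes a sequence over $\Z/p\Z$ (the first coordinate) to which Lemma~\ref{Lem:line 3/2} applies: every element of $\Z/p\Z$ is a subsequence sum of $B$ of length exactly $p$. The key point I want to extract is that I have a great deal of freedom---many length-$p$ subsequences of $B$ summing to any prescribed first coordinate---and in particular I can realize the value $0$ in the first coordinate with a length-$p$ zero-sum \emph{on the line}, which already is a zero-sum of length $p$ and finishes the contradiction. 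The remaining case is where $\ell$ does \emph{not} pass through the origin, so that zero-sums must use the off-line elements to correct the second coordinate.

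The heart of the argument, then, is the case where $\ell$ is an affine line not through $0$. Here I would split a putative zero-sum into a part on $\ell$ and a part off $\ell$. Using Lemma~\ref{Lem:line 3/2} I can choose a subsequence of $B$ of length $p-k$ (for a small constant $k$) whose first coordinate takes any desired value; the second coordinate is then determined up to the contribution of the $k$ off-line elements, which I want to use to hit the required target. I would show that the few elements of $A$ not on $\ell$---of which there are at most $(4p-4)-\frac{3}{2}p = \frac{5}{2}p-4$, but more importantly containing enough distinct directions off $\ell$, since otherwise $A$ would be concentrated on a bounded union of lines and a direct count would again force a zero-sum---allow one to adjust the off-line coordinate freely over $\Z/p\Z$ while consuming only a bounded number $k$ of elements. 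Combining the length-$(p-k)$ on-line selection with a length-$k$ off-line selection hitting the complementary value gives a zero-sum of length exactly $p$, the desired contradiction.

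The main obstacle I anticipate is the bookkeeping in the affine case: I must simultaneously control the \emph{length} (it has to be exactly $p$, not merely $\le p$), the first coordinate (handled by Lemma~\ref{Lem:line 3/2}, which crucially gives length exactly $p$ rather than ``some length''), and the second coordinate (handled by the off-line elements). The delicate step is guaranteeing that the off-line elements really do generate all residues in the second coordinate using only a bounded number of them; if the off-line elements were themselves trapped on a single line parallel to $\ell$, they would contribute nothing to the second coordinate, and one would have to fall back on the fact that then $A$ lives on two parallel lines, whose total length $<3p$ contradicts $|A|=4p-4$. Making this dichotomy clean, and tracking the small correction constant $k$ through the length constraint, is where the real care is required.
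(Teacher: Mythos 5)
Your first case already contains the entire proof, but you do not notice this, and the second case you build instead has real gaps. The observation you are missing is that the affine offset of the line is irrelevant precisely because the subsequence has length exactly $p$: writing $\ell=\{u+tv : t\in\Z/p\Z\}$ and $a_{i_j}=u+t_jv$, any subsequence of length exactly $p$ of elements on $\ell$ sums to $pu+(t_1+\dots+t_p)v=(t_1+\dots+t_p)v$. So one applies Lemma~\ref{Lem:line 3/2} to the parameter sequence $(t_j)$ --- which has length $\geq\frac{3}{2}p$ and maximal multiplicity $<\frac{p}{2}$, since distinct points of $\ell$ have distinct parameters --- to represent $0$ as the sum of a length-$p$ subsequence; this is a zero-sum subsequence of $A$ of length $p$ whether or not $\ell$ passes through the origin. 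Equivalently: translating every element by a fixed vector changes the sum of a length-$p$ subsequence by $p$ times that vector, i.e.\ not at all, so your reduction ``translate so that $\ell$ passes through $0$'' is fully legitimate and leaves no remaining case. This is exactly why the paper states the corollary without proof, as an immediate consequence of Lemma~\ref{Lem:line 3/2}.

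By contrast, the machinery you set up for the affine case does not work as described. First, Lemma~\ref{Lem:line 3/2} produces subsequences of length exactly $p$; it gives you no subsequence of length $p-k$ with prescribed sum, so the on-line part of your construction is unsupported by anything proved in the paper. Second, a bounded number $k$ of off-line elements cannot in general ``adjust the second coordinate freely over $\Z/p\Z$'': if, say, all off-line elements have second coordinate $d$, then any $k$ of them contribute exactly $kd$, a single value, and more generally boundedly many elements with bounded second coordinates reach only a bounded set of residues. Third, your fallback count is false: a line in $(\Z/p\Z)^2$ has $p$ points, each of which may occur with multiplicity up to $\frac{p}{2}$, so two parallel lines can carry far more than $4p-4$ elements, and no contradiction follows from $|A|=4p-4$. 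None of these defects matter once you use the length-exactly-$p$ observation above, but as written your proposal does not prove the corollary for lines missing the origin.
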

\begin{Lem}
For every $\epsilon>0$ there exists constant $\delta>0$ such that for every sequence $A$ of length $\geq \frac{15}{4}p$ which has no element with multiplicity $>\frac{p}{2}$ there either exists a zero-sum subsequence of length $p$, or there exists a subsequence $A'$ of length $\geq |A|-\epsilon p$, in which all elements have multiplicity $\geq\delta p$.
\end{Lem}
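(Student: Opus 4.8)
The plan is to argue by contraposition: let $A_0$ be the subsequence of all elements of $A$ whose multiplicity is $<\delta p$, and show that if $|A_0|>\epsilon p$ then $A$ already contains a zero-sum subsequence of length $p$; taking $A'=A\setminus A_0$ then proves the lemma. Throughout, let $W=W(\epsilon,2)$ be the constant produced by Lemma~\ref{Lem:AD} for the given $\epsilon$ and $d=2$, and fix $\delta$ at the end, small in terms of $\epsilon$ and $W$. We may assume $\epsilon<1$.

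The engine of the construction is a completion trick. Suppose we can find a subsequence $A_1\subseteq A_0$ with $|A_1|\ge\epsilon p$ in which no line carries more than $\frac{\epsilon p}{W}$ elements. By Lemma~\ref{Lem:AD}, every element of $(\Z/p\Z)^2$ is then the sum of a subsequence of $A_1$ of length $m:=\lceil\epsilon p/2\rceil$. Since $|A\setminus A_1|\ge\frac{15}{4}p-\lceil\epsilon p\rceil>p$, I may select from $A\setminus A_1$ an arbitrary subsequence $S$ of length $p-m$; writing $v$ for its sum, Lemma~\ref{Lem:AD} furnishes a length-$m$ subsequence $T$ of $A_1$ with sum $-v$, and $ST$ is a zero-sum of length $p$. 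Thus it suffices to produce such a spread-out $A_1$ inside $A_0$.

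To decide whether $A_1$ exists, I would cap each line: call a line \emph{heavy} if it meets $A_0$ in more than $h:=\frac{\epsilon p}{W}$ points, and form $A_1$ by taking $\min(\cdot,h)$ elements of $A_0$ from each line. If the resulting $A_1$ has length $\ge\epsilon p$ we are done by the previous paragraph. Otherwise, since each heavy line contributes exactly $h$ elements to $A_1$ while $A_1$ also contains every $A_0$-element lying off the heavy lines, the bound $|A_1|<\epsilon p=Wh$ forces both that there are fewer than $W$ heavy lines and that the part of $A_0$ off the heavy lines has length $<\epsilon p$. Hence $A_0$ lies, up to fewer than $\epsilon p$ exceptions, on a bounded number of lines, each of which, by Corollary~\ref{Cor:lines thin}, carries fewer than $\frac{3}{2}p$ elements of $A$.

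This concentrated case is the main obstacle. Here I would exploit that on a heavy line $\ell=q+\Z u$ the elements of $A_0$ form a one-dimensional sequence of length $>h$ all of whose multiplicities are $<\delta p$; choosing $\delta$ small enough that $\delta p$ lies below the point-multiplicity threshold of Lemma~\ref{Lem:AD} in dimension $d=1$, that lemma, applied on each line separately, lets me realise any prescribed residue as the $u$-coordinate-sum of a subsequence of controlled length drawn from $\ell$. Combining such adjustments across two or three of the heavy lines — using one line to carry the bulk of the length and the remaining directions to cancel both coordinates of the running sum while keeping the total length equal to $p$ — should again yield a zero-sum of length $p$. The crux is the simultaneous feasibility of this length-and-sum bookkeeping: one must distribute a total length of $p$ among the fewer than $W$ available lines subject to the per-line ceiling of $\frac{3}{2}p$ from Corollary~\ref{Cor:lines thin} and to the two scalar cancellation conditions. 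It is precisely here that the hypothesis $|A|\ge\frac{15}{4}p$ enters, for since $\frac{15}{4}=\frac{5}{2}\cdot\frac{3}{2}$ the elements occupy at least three lines' worth of length, so the feasibility region is non-empty; making this rigorous is the real content of the argument and foreshadows the convex-geometric real version of Property~D developed in Section~\ref{sec:geo}.
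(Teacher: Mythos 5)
Your first phase is essentially the paper's: the contrapositive, the completion trick (a spread\-/out subsequence of the low-multiplicity part provides, via Lemma~\ref{Lem:AD}, a length-$\lceil\epsilon p/2\rceil$ subsequence with any prescribed sum, which together with $p-\lceil\epsilon p/2\rceil$ arbitrary further elements gives a zero-sum of length $p$), and the conclusion that otherwise the low-multiplicity part $A_0$ is concentrated, up to $<\epsilon p$ exceptions, on boundedly many heavy lines. (Two small repairs are needed there: trim $A_1$ to length exactly $\lceil\epsilon p\rceil$ before invoking Lemma~\ref{Lem:AD}, and note that heavy lines overlap only in single points of multiplicity $<\delta p$, so your count of heavy lines is only valid for $\delta$ small; both are harmless.) The problem is the concentrated case, which you yourself flag as not rigorous; this is not a routine bookkeeping matter but is where the entire content of the lemma lies, so the proposal is not a proof.

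Concretely, your sketch for that case fails exactly in the main sub-case. If the heavy lines span two independent directions $u_1,u_2$, the bookkeeping is in fact trivial: take any subsequence of length $p-m_1-m_2$ from the rest of $A$, and use the one-dimensional Lemma~\ref{Lem:AD} on each of the two lines to solve a $2\times 2$ linear system for the two free coordinates; no per-line ceiling or feasibility region is needed. The crux is when there is a single heavy line, or all heavy lines are parallel with direction $u$ (this genuinely occurs, e.g.\ when $|A_0|$ is barely above $\epsilon p$ and lies on one line). Then your adjustments move the running sum only along $u$ and cannot cancel the transverse coordinate, while the bulk of any length-$p$ subsequence must come from $A\setminus A_0$, i.e.\ from high-multiplicity elements at arbitrary points, about which your argument says nothing; note also that ``three lines' worth of length'' refers to lines of $A$, not to your heavy lines of $A_0$, so the $\frac{15}{4}=\frac52\cdot\frac32$ numerology does not produce a feasible distribution. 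The paper resolves precisely this situation (and thereby the whole concentrated case) as follows: it fixes one heavy line $\ell$ with low-multiplicity subsequence $C$, so that length-$m$ sums from $C$ cover a full line $\ell'$ parallel to $\ell$; it then projects all of $A\setminus C$ (length $\geq 3.74p$, high-multiplicity elements included) along $\ell$ to a sequence $P$ over $\Z/p\Z$; Corollary~\ref{Cor:lines thin} bounds every multiplicity in $P$ by $\frac32p$; a counting argument (the case distinction on the number $k$ of multiplicities exceeding $\frac p2$, which is where the hypothesis $|A|\geq\frac{15}{4}p$ actually enters) extracts $Q\subseteq P$ with $|Q|\geq\frac32p$ and maximal multiplicity $\leq\frac p2$; and Lemma~\ref{Lem:line 3/2} then represents the required transverse residue as a subsequence sum of the complementary length. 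This reduction to a one-dimensional covering problem and the appeal to Lemma~\ref{Lem:line 3/2} --- which your proposal never invokes --- are exactly the missing ideas.
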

\begin{proof}
If $A$ is a counterexample to the lemma, then we can pick a subsequence $B$ of $A$ of length $\epsilon p$, such that no element in $B$ has multiplicity $\geq\delta p$, where $\delta$ may be chosen depending on $\epsilon$ arbitrarily small. 
By Lemma~\ref{Lem:AD} there exists a constant $W=W(\epsilon)$ such that we can pick a subsequence $B'$ of $B$ of length $\geq \frac{\epsilon}{2}p$ which is covered by $W$ lines. Hence, we can choose a line $\ell$ that contains a sequence $C$ of length $\geq \frac{\epsilon}{2W}p$ elements of $A$, but no element with multiplicity $\geq\delta p$. We can now apply Lemma~\ref{Lem:AD} again to find that every element on a line $\ell'$ parallel to $\ell$ can be represented as the sum of a subsequence of $C$ of length $\left\lceil\frac{\epsilon}{2W}p\right\rceil$, provided that $\delta$ is sufficiently small.

Without loss of generality we may assume that $\ell'$ is parallel to the first coordinate axis. Let $\varphi:(\Z/p\Z)^2\rightarrow\Z/p\Z$ be the projection to the second coordinate. The projection $P=\pi(B\setminus B')$ has length $\geq 3.74p$, and no element has multiplicity $\geq\frac{3}{2}p$. If we can pick a subsequence $Q$ of $P$ of length $\geq\frac{3}{2}$ and maximal multiplicity $\leq\frac{p}{2}$, then we can apply Lemma~\ref{Lem:line 3/2} and are done. If this is not possible, let $m_1, m_2, \ldots, m_k$ be the list of all multiplicities $>\frac{p}{2}$ in $P$. If we take these $k$ elements with multiplicity $\frac{p-1}{2}$ each, then we obtain a sequence $Q$ as desired, unless $\frac{p-1}{2}k + 3.74-(m_1+\dots+m_k)\leq\frac{3p}{2}$. Clearly the left hand side of this inequality is decreasing in each of the $m_i$, hence, the left hand side is at least the value obtained if we set $m_i=\frac{3p-1}{2}$. In particular our claim holds for $k\leq 2$. On the other hand, for $k\geq 3$ it is trivial, and the proof is complete.
\end{proof}

\begin{Lem}\label{Lem:strip}
For every $\epsilon>0$ there exists a constant $C$ such that if $A$ is a sequence of length $4p-4$ with no zero-sum subsequence of length $p$ and maximal multiplicity $\leq\frac{p}{2}$, then after a suitable transformation of coordiatnes all but $\epsilon p$ of the elements of $A$ have first coordinate in the range $[-C, C]$.
\end{Lem}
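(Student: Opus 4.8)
The plan is to first use the preceding lemma to replace $A$ by a subsequence supported on boundedly many heavy atoms, and then to show that such a configuration either lies in a thin strip or else forces a zero-sum of length $p$. Concretely, I would apply the preceding lemma with a small parameter $\epsilon'\ll\epsilon$: since $|A|=4p-4>\tfrac{15}{4}p$ for large $p$ and $A$ has maximal multiplicity $\le\tfrac p2$, either $A$ already contains a zero-sum of length $p$ (contradicting the hypothesis, so there is nothing to prove) or there is a subsequence $A'$ with $|A'|\ge|A|-\epsilon' p$ all of whose elements have multiplicity $\ge\delta p$. In particular $A'$ takes at most $K:=4/\delta$ distinct values $v_1,\dots,v_k$, and it suffices to produce a linear functional $\psi(X,Y)=\alpha X+\beta Y$ under which all but $\epsilon p$ of the mass of $A'$ satisfies $|\psi|\le C$; completing $\psi$ to an element of $GL_2(\Z/p\Z)$ then realises it as the first coordinate.

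Next I would record the elementary geometric reductions. If the $v_i$ were collinear, the line through them would carry $|A'|>\tfrac32 p$ of the mass, contradicting Corollary~\ref{Cor:lines thin}; hence the $v_i$ span the plane. For any difference $d=v_i-v_j$, letting $\psi$ vanish on $d$ collapses $v_i,v_j$ to a common value and sends the remaining atoms to at most $k-2$ further values $s_1,\dots,s_r\in\Z/p\Z$. If some scaling $\lambda\psi$ carries all of these into $[-C,C]$, we are done. Thus the only obstruction is that, in every such direction, the at most $K$ projected values cannot be simultaneously scaled into a bounded interval; equivalently their pairwise ratios are never all representable as fractions with bounded numerator and denominator.

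The key step is to convert this obstruction into a zero-sum of length $p$ via Lemma~\ref{Lem:line 3/2}. With $d$ and $\psi$ chosen as above, the obstruction means the projected sequence $\psi(A')$ over $\Z/p\Z$ realises the ``$0,1,t$'' configuration of Lemma~\ref{Lem:line 3/2}, with a value $t$ that is not a small fraction, so that the iterated sums $0,t,2t,\dots$ partition $\{0,\dots,p\}$ into short intervals. As in the proof of that lemma, this makes every residue a sum of a length-$p$ subsequence of $\psi(A')$, so the $\psi$-coordinate and the total length can be prescribed at once. The point is that Lemma~\ref{Lem:line 3/2} leaves great freedom: there are many length-$p$ subsequences with a given $\psi$-sum, and as one varies among them the complementary coordinate sum changes. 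I would argue that, because the heavy atoms span the plane and each occurs with multiplicity $\ge\delta p$, these complementary sums run through all of $\Z/p\Z$; selecting the representation whose complementary sum is $0$ then yields a length-$p$ zero-sum, contradicting the hypothesis. Hence no direction is obstructed, and the strip exists.

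The main obstacle is precisely this simultaneous control: Lemma~\ref{Lem:line 3/2} pins down one coordinate together with the length, whereas a zero-sum requires both coordinates and the length at the same time. Quantifying how far the complementary coordinate can be moved without destroying the control over the $\psi$-coordinate and the length — that is, showing the complementary sums sweep all of $\Z/p\Z$ while the projected sequence retains length $\ge\tfrac32 p$ and maximal multiplicity $\le\tfrac p2$ so that Lemma~\ref{Lem:line 3/2} still applies — is where the real work concentrates, and the length book-keeping must be closed up to exactly $p$.
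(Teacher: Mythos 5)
There is a genuine gap, and you have in fact located it yourself: the assertion that ``these complementary sums run through all of $\Z/p\Z$'' is exactly the content of the lemma, not a step on the way to it. Lemma~\ref{Lem:line 3/2} is a one-dimensional statement: it lets you prescribe the $\psi$-sum and the length simultaneously, but it gives you no handle on the distribution of the orthogonal coordinate over the family of length-$p$ subsequences it produces. Concretely, suppose the heavy atoms are $(0,0),(1,0),(0,1),(s,t)$ and you project by $\psi(X,Y)=Y$. A length-$p$ subsequence with $\psi$-sum $\equiv 0$ uses $a$ elements from the fiber $\{Y=0\}$, $b$ copies of $(0,1)$ and $c$ copies of $(s,t)$ with $a+b+c=p$, $b+ct\equiv 0$. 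The only ``free'' motion you exhibit is redistribution inside the fiber $\{Y=0\}$, which moves the $X$-sum through an interval of length at most the mass of that fiber, i.e.\ $O(\delta p)$ after your reduction to heavy atoms — not through all of $\Z/p\Z$. To move further you must vary $(a,b,c)$, i.e.\ re-enter the construction of Lemma~\ref{Lem:line 3/2} and track the $X$-coordinate through it; nothing in that lemma or in Corollary~\ref{Cor:lines thin} controls this. So the ``selection of the representation whose complementary sum is $0$'' is not available by the tools you cite, and the sweep claim is essentially a restatement of the two-dimensional problem.

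This is precisely why the paper does not argue through one-dimensional projections at this point. Its proof is genuinely two-dimensional: assuming no bounded strip exists in any coordinate system, it extracts heavy atoms $(0,0),(1,0),(0,1),(s,t)$ where $(s,t)$ satisfies no relation $as+bt+c\equiv 0 \pmod p$ with $|a|,|b|,|c|\leq C$ (note: it needs only this one ``generic'' atom, not an obstruction in every direction), and then counts, by Fourier inversion over $(\Z/p\Z)^2$, the representations of an arbitrary element $x$ as $\sum_{i<j}\nu_{ij}(x_i-x_j)$ with $0\leq\nu_{ij}\leq N$. The whole analytic work is the bound
\[
\underset{\alpha\neq(0,0)}{\sum_{\alpha\in(\Z/p\Z)^2}} Z(\alpha\cdot(1,0))\,Z(\alpha\cdot(0,1))\,Z(\alpha\cdot(s,t)) = o(p^3),
\]
proved by the rearrangement inequality and the non-existence of small linear relations; positivity of the count then shows every $x$ is so representable. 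Such a representation by \emph{differences} of heavy atoms is what replaces your missing sweep: swapping $\nu_{ij}$ copies of $x_j$ for copies of $x_i$ inside a length-$p$ subsequence changes the sum by $\sum\nu_{ij}(x_i-x_j)$ while preserving the length exactly, so hitting every target $x$ — both coordinates at once — yields the zero-sum of length $p$ and the contradiction. If you want to salvage your plan, you would need an argument of this strength for the joint distribution of both coordinates; the one-dimensional lemmas in the paper cannot supply it.
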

\begin{proof}
Let $x_1, x_2, \ldots, x_n$ be the points in $(\Z/p\Z)^2$ that occur with multiplicity $\geq\delta p$. Put
\[
N=\left\lfloor\min\left(\frac{p}{\binom{n}{2}}, \frac{\delta p}{n-1}\right)\right\rfloor.
\]
For every $\alpha\in(\Z/pZ)^2$ define the exponential sum
\[
S(\alpha) = \prod_{1\leq i<j\leq n}\sum_{\nu=0}^N \eta(\nu\alpha\cdot (x_i-x_j)),
\]
where $\eta$ is a fixed non-trivial additive character of $\Z/p\Z$. The number of representations of an element $x\in(\Z/p\Z)^2$ in the form $x = \sum_{1\leq i<j\leq n} \nu_{ij}(x_i-x_j)$ with $0\leq \nu_{ij}\leq N$ is
\begin{multline}
\label{eq:Fourier}
\frac{1}{p^2}\sum_{\alpha\in(\Z/p \Z)^2} S(\alpha) \eta(-\alpha\cdot x)= \frac{(N+1)^{\binom{n}{2}}}{p^2} + \frac{1}{p^2}\underset{\alpha\neq(0,0)}{\sum_{\alpha\in(\Z/p \Z)^2}} S(\alpha)\eta(-\alpha\cdot x)\geq\\
 \frac{(N+1)^{\binom{n}{2}}}{p^2} - \frac{1}{p^2}\underset{\alpha\neq(0,0)}{\sum_{\alpha\in(\Z/p \Z)^2}} \prod_{1\leq i<j\leq n} \left|\sum_{\nu=0}^N \eta(\nu\alpha\cdot (x_i-x_j))\right|.
\end{multline}
We may choose a basis of $(\Z/p\Z)^2$ such that $\eta(u)=e^{\frac{2\pi i u}{p}}$. Then we get
\[
\left|\sum_{\nu=0}^N \eta(\nu\alpha\cdot (x_i-x_j))\right| \leq \min\left(N+1, \frac{2}{|1-e^{2\pi i\alpha\cdot (x_i-x_j)/p}|}\right) =: Z(\alpha\cdot(x_i-x_j)).
\]
Now assume the lemma is false. Then, for every $C$ we can find a sequence $A$ such that $A$ contains no zero-sum of length $p$, and no linear transform maps the support of $A$ to the strip $[-C, C]\times\Z/p/Z$. In particular after a suitable transformation we may assume that $A$ contains $(0,0)^{\lfloor\delta p\rfloor}(0,1)^{\lfloor\delta p\rfloor}(1,0)^{\lfloor\delta p\rfloor}(s,t)^{\lfloor\delta p\rfloor}$, where $s, t$ do not satisfy a linear relation of the form $as+bt+c\equiv 0\pmod{p}$ with $|a|, |b|, |c|\leq C$.

If we estimate trivially all factors in (\ref{eq:Fourier}) with the exceptions of those where $x_i=(0,0)$, and $x_j$ is one of $(1,0)$, $(0,1)$, $(s,t)$, we see that the number of representations of an element $x$ is at least
\[
\frac{(N+1)^{\binom{n}{2}}}{p^2} - \frac{(N+1)^{\binom{n}{2}-3}}{p^2}\underset{\alpha\neq(0,0)}{\sum_{\alpha\in(\Z/p \Z)^2}}  Z(\alpha\cdot(1,0)) Z(\alpha\cdot(0,1)) Z(\alpha\cdot(s,t)),
\]
so it is sufficient to show that for $C$ sufficienlty large we have
\[
\underset{\alpha\neq(0,0)}{\sum_{\alpha\in(\Z/p \Z)^2}}  Z(\alpha\cdot(1,0)) Z(\alpha\cdot(0,1)) Z(\alpha\cdot(s,t)) \leq \frac{1}{2}(N+1)^3.
\]
With this goal in mind we consider $\delta$ as small but fixed, whereas $C$ tends to infinity. We will show that the sum in question actually is $o(p^3)$, so if we pick $C$ large enough, our claim follows from $p\ll N$, as $\delta$ is considered fixed.

The contribution of summands where $\alpha\cdot(1,0)=0$ is
\[
\ll (N+1) \sum_{\nu=1}^{p-1} \frac{1}{\|\nu/p\|} \frac{1}{\|\nu t/p\|},
\]
where $\|\cdot\|$ denotes the difference to the nearest integer. 
By assumption we have that one of $|\nu/p\|, |\nu t/p\|$ is larger than $C$, so
\[
\sum_{\nu=1}^{C} \frac{1}{\|\nu/p\|} \frac{1}{\|\nu t/p\|} \leq \sum_{\nu=1}^{C} \frac{p}{\nu} \frac{p}{C} \ll \frac{\log C}{C}\ p^2,
\]
which, for sufficiently large $C$, is negligible. For the remaining sum note that as $\nu$ runs over $\{1, \ldots, p-1\}$, $\frac{1}{\|\nu t/p\|}$ attains each value in $\{\frac{p}{1}, \ \frac{p}{2}, \ldots, \frac{p}{(p-1)/2}\}$ exactly twice, so by the rearrangement inequality we get
\[
\sum_{\nu=C+1}^{p-C-1} \frac{1}{\|\nu/p\|} \frac{1}{\|\nu t/p\|} \leq 2\sum_{\nu=C}^{\frac{p-1}{2}} \frac{p}{\nu} \frac{p}{\nu-C} \ll \frac{\log C}{C}\ p^2,
\]
which is also sufficient. We conclude that we may neglect all terms on the line orthogonal to $(1,0)$, and the same argument applies to the lines orthogonal to $(0,1)$ and $(s,t)$, and by symmetry it suffices to bound the sum
\[
\underset{\nu s+\mu t\not\equiv 0\pmod{p}}{\sum_{\nu,\mu=1}^{\frac{p-1}{2}}}\frac{1}{\nu\mu|\nu s+\mu t\bmod p|},
\]
where $\bmod$ denotes the operator mapping an integer to the residue of least absolute value.
We again use the rearrangement inequality to obtain
\[
\underset{\nu s+\mu t\not\equiv 0\pmod{p}}{\sum_{C\leq\nu\leq\mu\leq\frac{p-1}{2}}}\frac{1}{\nu\mu|\nu s+\mu t\bmod p|} \leq \underset{\nu s+\mu t\not\equiv 0\pmod{p}}{\sum_{C\leq\nu\leq\mu\leq\frac{p-1}{2}}}\frac{1}{\nu\mu(\mu-\nu+1)}\ll\sum_{\nu=C}^{\frac{p-1}{2}}\frac{\log\nu}{\nu^2} \ll \frac{\log C}{C}
\]
and
\[
\underset{\nu s+\mu t\not\equiv 0\pmod{p}}{\sum_{\nu\leq C\leq\mu\leq\frac{p-1}{2}}}\frac{1}{\nu\mu|\nu s+\mu t\bmod p|} \leq \log C \sum_{\mu=C}^{\frac{p-1}{2}} \frac{1}{\mu(\mu-C+1)}\ll\frac{\log^2 C}{C}.
\]
Finally, if $1\leq\nu, \mu\leq C$, then $\nu s+\mu t\bmod p|>C$, and we obtain
\[
\underset{\nu s+\mu t\not\equiv 0\pmod{p}}{\sum_{\nu,\mu\leq C}}\frac{1}{\nu\mu|\nu s+\mu t\bmod p|} \leq \frac{1}{C}\sum_{\nu,\mu\leq C}\frac{1}{\nu\mu}  \ll\frac{\log^2 C}{C}.
\]
We conclude that the number of representations of a point $x$ is $\left(1+\mathcal{O}\left(\frac{\log^2 C}{C}\right)\right)\frac{(N+1)^{\binom{n}{2}}}{p^2}$, which for $C_1$ sufficiently large certainly becoms positive.
\end{proof}
\begin{Lem}
For every $\epsilon>0$ there exists a constant $C_2$ such that if $A$ is a sequence of length $4p-4$ with no zero-sum subsequence of length $p$ and maximal multiplicity $\leq\frac{p}{2}$, then after a suitable transformation of coordinates all but $\epsilon p$ of the elements of $A$ have both coordinates in the range $[-C_2, C_2]$.
\end{Lem}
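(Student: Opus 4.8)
The plan is to bootstrap from Lemma~\ref{Lem:strip}, which already confines the first coordinate, and then to confine the second coordinate by a second application of the same Fourier machinery, now run essentially one-dimensionally along the line into which the mass has been pushed. First I would apply Lemma~\ref{Lem:strip} with parameter $\epsilon/2$ to obtain, after a linear change of coordinates, a constant $C_1$ so that all but $\frac{\epsilon}{2}p$ elements of $A$ have first coordinate in $[-C_1,C_1]$. Using the preceding lemma on high multiplicities I may also assume that all but $\frac{\epsilon}{2}p$ elements sit on a bounded number $n\le 4/\delta$ of heavy points $x_1,\dots,x_n$, each of multiplicity $\ge\delta p$; after the change of coordinates these heavy points all lie in the strip, and by Corollary~\ref{Cor:lines thin} no single column $x=\mathrm{const}$ carries $\frac{3}{2}p$ of them, so at least three distinct columns are occupied with positive mass.

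Next I would argue by contradiction. Suppose no $C_2$ works; then for every bound the second coordinates of the heavy points fail to be confinable, which, after the coordinate changes $(x,y)\mapsto(x,\alpha y+\beta x)$ that preserve the first-coordinate strip, means we may assume that the second coordinates do not satisfy any linear relation with bounded coefficients. Concretely I would normalize so that among the heavy points one finds $(0,0)$, a point $(a,1)$, and a point $(s,t)$ with $t$ generic, i.e.\ not representable by a fraction of bounded height, while $s$ stays in $[-C_1,C_1]$. This is exactly the configuration that powers the proof of Lemma~\ref{Lem:strip}, except that now the first coordinates of all pairwise differences $x_i-x_j$ are bounded by $2C_1$, so the exponential sum $S(\alpha)$ degenerates into an essentially one-dimensional sum in the second coordinate.

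I would then re-run the estimate of Lemma~\ref{Lem:strip}, isolating the three difference factors coming from the special pairs and bounding the rest trivially. The genericity of $t$ makes the surviving double sum $\ll\frac{\log^2 C}{C}\,p^3=o(p^3)$ exactly as before, so every residue in the second coordinate is represented as a bounded-coefficient combination $\sum\nu_{ij}(x_i-x_j)$ whose first coordinate vanishes. To turn this into a genuine zero-sum I would first fix a balanced base selection $c_i^0\in[0,m_i]$ with $\sum_i c_i^0=p$ and $\sum_i c_i^0 a_i\equiv 0\pmod p$, where $a_i$ is the first coordinate of $x_i$; this is solvable because at least three bounded columns carry multiplicity $\ge\delta p$. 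The representation of the second-coordinate defect then lets me adjust the $c_i^0$ by the bounded amounts $\nu_{ij}$ without leaving the intervals $[0,m_i]$ and without changing either the length $p$ or the vanishing first coordinate, producing a zero-sum of length $p$ and the desired contradiction.

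The main obstacle I anticipate is precisely this decoupling: the Fourier estimate only delivers representability once the first coordinate has been pinned to $0$, so the bounded first coordinate must be carried as a hard constraint throughout, both when selecting a base of exact length $p$ with $\sum_i c_i^0 a_i\equiv0$ and when restricting the adjusting differences to those that preserve it, all while ensuring the multiplicity budgets $[0,m_i]$ absorb the bounded corrections. Checking that the one-dimensional sum remains $o(p^3)$ under the weaker genericity that survives the strip reduction, and that such a balanced base always exists, are the two points where the estimates of Lemma~\ref{Lem:strip} and the three-column bound from Corollary~\ref{Cor:lines thin} have to be combined with care.
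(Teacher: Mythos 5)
Your strategy---contradiction via a ``generic'' second coordinate $t$ and a re-run of the exponential-sum estimate of Lemma~\ref{Lem:strip}---breaks down at the Fourier step, and the obstacle you flag at the end (``the first coordinate must be pinned to $0$'') is fatal rather than technical. After Lemma~\ref{Lem:strip} every heavy point lies in the strip $[-C_1,C_1]\times\Z/p\Z$, so \emph{every} difference $x_i-x_j$ of heavy points has first coordinate bounded by $2C_1$. Now consider the characters $\alpha=(\alpha_1,0)$. For these, $\alpha\cdot(x_i-x_j)=\alpha_1\cdot(\text{bounded integer})$, so the genericity of $t$ never enters: all three isolated factors satisfy $Z(\alpha\cdot(x_i-x_j))\approx\min\bigl(N+1,\,p/(\mathrm{const}\cdot|\alpha_1|)\bigr)$ \emph{simultaneously}. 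Already the terms $\alpha_1=\pm 1$ contribute $\gg_{C_1,\delta}(N+1)^3$ to the error sum, and summing over all $\alpha_1$ gives a contribution $\asymp p^3$ whose implied constant depends only on $C_1$, $\delta$ and $n$ --- not on the constant $C_2$ (or $M$) that you want to send to infinity. Hence the inequality $\sum_{\alpha\neq(0,0)}Z\,Z\,Z\leq\frac{1}{2}(N+1)^3$, which is what makes the representation count positive, cannot be achieved, and no contradiction follows. This is not merely an artifact of crude bounds: the first coordinate of $\sum\nu_{ij}(x_i-x_j)$ is a sum of $\binom{n}{2}$ terms of size at most $2C_1N$, so its distribution modulo $p$ is genuinely non-uniform, and the number of representations of $(0,y)$ really can deviate from the main term $(N+1)^{\binom{n}{2}}/p^2$ by constant factors. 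Your ``balanced base'' bookkeeping, whatever its own difficulties (the exchanges $\nu_{ij}$ must also fit inside the multiplicity budgets $[0,m_i]$), never gets to run, because the representability statement it relies on is never established.

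The paper avoids exponential sums here entirely. It normalizes so that $A$ contains $(0,0)$, $(1,r)$, an element $(u,v)$ whose second coordinate is not a bounded-height fraction, and an element $(s,t)$ off the two lines through the origin, and then \emph{rescales to equalize first coordinates}: grouping $us$ copies of $(1,r)$, $s$ copies of $(u,v)$, and $u$ copies of $(s,t)$ produces blocks $(us,usr)$, $(us,sv)$, $(us,ut)$ with identical first coordinate (all scalars bounded by $4C_1^2$, so only a bounded factor of length is lost). Sums of a fixed number of such blocks automatically lie on a single vertical line, and covering that whole line reduces to the one-dimensional statement that every residue is a short subsequence sum of $(sv-usr)^{k}(ut-usr)^{k}$, which holds because the ratio of these two numbers is not a fraction of bounded height --- the same elementary interval-partition argument used in Lemma~\ref{Lem:line 3/2}. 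In other words, the paper conditions on the first coordinate \emph{before} doing any analysis, which is precisely the step your two-dimensional Fourier argument cannot perform inside a strip; any repair of your approach would have to incorporate such a conditioning device.
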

\begin{proof}
In view of Lemma~\ref{Lem:strip} we may assume that all elements of the sequence are in a strip $[-C_1, C_1]\times \Z/p\Z$, and all elements occur with multiplicity $>\delta p$. We can translate $A$ and multiply the second coordinate by some element, such that $A$ contains $(0,0)$ and $(r, 1)$ for some $r\in[-2C_1, 2C_1]$, and $A$ is contained in $[-2C_1, 2C_1]\times \Z/p\Z$. If we cannot multiply the second coordinate by some number, such that $A$ is supported on $[-2C_1, 2C_1]\times[M!C_1, M!C_1]$, then there exists some element $(u,v)$ in $A$, such that $v$ cannot be represented as a fraction with numerator and denominator of absolute value $\leq M$. Pick such an element $(u,v)$.  Finally pick an element $(s,t)$ which is not on one of the lines through $(0,0)$ and one of $(r,1)$, $(u,v)$. We claim that no matter how small $\delta$ is, we can always pick $M$ so large that the set of sums of subsequences of $(0,0)^{\lfloor\delta p\rfloor}(1,r)^{\lfloor\delta p\rfloor}(u,v)^{\lfloor\delta p\rfloor}(s,t)^{\lfloor\delta p\rfloor}$ of length $\lfloor\delta p\rfloor$ contains a line parallel to $\{(0,y):y\in\Z/p\Z\}$.

We may assume that $v/t$ cannot be represented as a fraction with numerator and denominator $\leq M!C_1$, for otherwise we can simply swap the r\^ole of $(1,r)$ and $(u,v)$.

Note that $|r|, |s|, |u|\leq 2C_1$. Hence, the set of sums of subsequences of $(0,0)^{\lfloor\delta p\rfloor}$ $(1,r)^{\lfloor\delta p\rfloor}$ $(u,v)^{\lfloor\delta p\rfloor}$ $(s,t)^{\lfloor\delta p\rfloor}$ of length $\lfloor\delta p\rfloor$ is at least as large as the set of sums of subsequences of $(us,usr)^{\lfloor\delta p\rfloor}$ $(us,sv)^{\lfloor\delta p\rfloor}$ $(us,ut)^{\lfloor\delta p\rfloor}$ of length at most $\frac{\delta p}{4C_1^2}$. It therefore suffices to show that every element of $\Z_p$ can be written as the sum of a subsequence of $(sv-usr)^{\lfloor\delta p\rfloor}(ut-usr)^{\lfloor\delta p\rfloor}$ of length at most $\frac{\delta p}{4C_1^2}$. However, this follows immediately from the fact that $\frac{sv-usr}{ut-usr}$ is not a fraction with both numerator and denominator small.
\end{proof}

\section{The real version of Property D}
\label{sec:geo}

In this section we solve a continuous analogue of property D. Let $A=a_1^{m_1}a_2^{m_2}\cdots a_k^{m_k}$ be a sequence over $\Z^2$. 
We want to show that under suitable conditions there exist integers $e_1, \ldots, d_k$, such that $0\leq e_i\leq m_i$, $\sum_{i=1}^k e_i=p$ and $\sum_{i=1}^k e_i a_i$ has both coordinates divisible by $p$.
To do so we consider the set $S$ of all elements that can be represented as a subset sum of length $p$. If the multiplicities $m_i$ are pretty large, the coordinates of the $a_i$ are quite small, and the $a_i$ generate $\Z^2$ as a group, then $S$ consists of all lattice points in a certain polygon, with the exception of elements with bounded distance from the boundary of the polygon. 
Hence, it suffices to find a lattice point with both coordinates divisible by $p$, which is in the interior of the convex hull $\overline{S}$ of $S$, and has a sufficient distance from the boundary of 
$\overline{S}$. 

The convex hull $\overline{S}$ can be described as $\{\sum_{i=1}^k t_i a_i:t_i\in[0, m_i,] \sum_{i=1}^k t_i = p\}$. We can further simplify the problem by rescaling. To get rid of all references to $p$, we allow arbitrary non-negative real multiplicities $\mu_i$. Formally we define a real sequence $A$ over a group $G$ as a finite or infinite sequence $((a_i, \mu_i))$ of pairs of distinct elements $a_i$ of $G$ and non-negative real numbers $\mu_i$. We call $\mu_i$ the muliplicities of $a_i$, and $|A|=\sum \mu_i$ the length of $A$. If $X\subseteq G$, we say that $X$ contains $\sum_{i:a_i\in X} \mu_i$ elements of $A$. We put $\Sigma_\R^1(A) = \{\sum t_i a_i:t_i\in[0, \mu_i,] \sum t_i = 1\}$.


\begin{Prop}
\label{Prop:Dreal}
There exists a constant $c>0$, such that the following holds.
If $A\subseteq\Z^2$ is a real sequence with $|A|=3.99$, maximal
multiplicity $\leq 1/2$, such that no line contains more than 3/2
points. Then after a suitable linear transformation preserving the lattice $\Z^2$, there exists a lattice point $x$, such that $B_{c}(x)\subseteq \Sigma_\R^1(A)$.
\end{Prop}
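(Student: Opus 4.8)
The plan is to recast the statement as a lattice-point problem for the convex polygon $K:=\Sigma_\R^1(A)$ and then to use the three hypotheses to show that $K$ is simultaneously large and, after a transformation preserving $\Z^2$, round enough to swallow a ball about a lattice point. The support function of $K$ in a direction $\theta$ is computed by a greedy fractional knapsack: order the $a_i$ by $\langle\theta,a_i\rangle$ and assign the weight $\mu_i$ to the largest entries until the total weight reaches $1$. Thus $K$ is a convex polygon whose vertices are these threshold assignments, and the width of $K$ in direction $\theta$ is the top-unit-mass average of the $\langle\theta,a_i\rangle$ minus their bottom-unit-mass average. Finding $g\in GL_2(\Z)$ and a lattice point $x$ with $B_c(x)\subseteq gK$ is equivalent to finding a lattice point at distance $\ge c$ from the boundary of $gK$, i.e.\ a lattice point in the erosion $(gK)_{-c}$.

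The decisive structural input is a lower bound for the lattice width of $K$. Fix a primitive $v\in\Z^2$; since $x\mapsto\langle v,x\rangle$ is integer valued on $\Z^2$, it partitions the support of $A$ into levels, one for each line $\langle v,\cdot\rangle=j$. By hypothesis each such line carries mass at most $3/2$, while $|A|=3.99>3$, so at least three distinct integer levels are occupied; a short computation with the masses (each $\le 3/2$, summing to $3.99$) shows that the top- and bottom-unit-mass averages of these levels differ by at least $1.99$. Hence the width of $K$ in \emph{every} integer direction, and in particular the lattice width, is at least $1.99$. This already excludes thinness along rational directions; the only remaining danger, a body that is geometrically thin along an \emph{irrational} direction, will be ruled out after the support has been confined to a bounded box, since then only finitely many lattice points are available and they cannot crowd arbitrarily close to an irrational line.

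Next I would pass to a bounded family of shapes. Points of $A$ with large coordinates only enlarge $K$, so after a $GL_2(\Z)$-reduction adapted to $K$ — choosing a Minkowski-reduced basis $v_1,v_2$ for the norm whose unit ball is $\tfrac12(K-K)$ — one may assume the essential part of the support lies in a box $[-C,C]^2$ with $C$ absolute, and that both coordinate widths of $K$ lie between $1.99$ and a constant depending only on $C$. On this precompact set of reduced shapes I would then exhibit the required lattice point: the width $\ge 1.99$ in the reduced first coordinate produces an integer first coordinate with margin $\ge c$ for any $c\le 0.49$, the comparable second width produces an admissible second coordinate, and convexity together with the explicit polygonal structure of $K$ places the resulting lattice point in the interior with the claimed margin, giving a uniform $c$.

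The hard part is the tightness of the constants. The lattice width one can guarantee is only $1.99$, which lies below the planar flatness constant $1+2/\sqrt3\approx 2.155$; consequently a flatness theorem cannot simply be quoted to produce an interior lattice point, and the transformation is genuinely needed. The argument must instead use that $K=\Sigma_\R^1(A)$ is not an arbitrary convex body but a zonotope slice, so that it avoids the extremal flat triangles, and — in the spirit of the finite search announced in the introduction — reduce the remaining instances to a finite list of reduced shapes on which the existence of a sufficiently interior lattice point is checked directly. Obtaining a genuinely finite and checkable family, while keeping $c$ uniform across the boundary cases in which a line carries mass exactly $3/2$ or a multiplicity equals $1/2$, is where I expect the main difficulty to lie.
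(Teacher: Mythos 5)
Your proposal sets the stage but never proves the statement, and you concede as much in your last paragraph. The lattice-width computation is correct (each primitive direction gives width at least $1.99$), and so is your observation that $1.99$ lies below the planar flatness constant $1+2/\sqrt3$, so no flatness theorem can be quoted. But this observation undercuts your own third paragraph: having width $\geq 1.99$ in \emph{both} reduced coordinate directions does not place a lattice point in the interior of $K$ — a thin parallelogram stretched along the diagonal has both coordinate widths large and still contains no lattice point far from its boundary. That is exactly the configuration your plan must exclude, and the only tool you offer for it is ``reduce the remaining instances to a finite list of reduced shapes on which the existence of a sufficiently interior lattice point is checked directly,'' with no finite list produced, no check performed, and no mechanism for keeping $c$ uniform in the boundary cases (a line of mass exactly $3/2$, multiplicities exactly $1/2$). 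The proposal is therefore a strategy announcement with the decisive step missing.

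The paper closes this gap with two ideas absent from your write-up. First, a compactness reduction (over sequences supported in $([-C,C]\cap\Z)^2$) trades the uniform ball $B_c(x)$ for the purely qualitative assertion that $\Sigma_\R^1(A)$ contains an interior lattice point; this disposes of your worry about uniformity of $c$ across degenerate cases. Second — and this is the actual engine — the paper passes to a counterexample for which $\Sigma_\R^1(A)$ is minimal under inclusion and exploits a mass-shifting argument (Lemma~\ref{Lem:DReplace}): moving mass from the vertices of a triangle of support points onto a lattice point inside it only shrinks $\Sigma_\R^1(A)$, so in a minimal counterexample every such interior lattice point must already be saturated, i.e.\ have multiplicity $1/2$ or lie on a line of mass $3/2$. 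A pigeonhole argument modulo $2$ (among nine support points, three share a residue class in $(\Z/2\Z)^2$, and midpoints of such pairs are lattice points that the replacement lemma forces to multiplicity $1/2$) then yields three affinely independent points of multiplicity $1/2$, which after a lattice transformation are $(0,0),(1,0),(0,1)$; separate lemmas rule out saturated lines and non-primitive differences, and a short case analysis on the multiplicity of $(1,1)$ (equal to $1/2$, strictly between $0$ and $1/2$, or $0$) either exhibits an interior lattice point or contradicts $|A|=3.99$. It is this rigidity mechanism, not a reduction theory of shapes, that makes the ``finite check'' you were hoping for actually finite; without something playing its role, your approach cannot be completed.
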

As the set of all real sequences of length $\leq 3.99$ with support in $([-C, C]\cap\Z)^2$, no multiplicity $>\frac{1}{2}$ and no line containing more than $\frac{3}{2}$ points is a compact subset of $\R^{(2C+1)^2}$, and the function mapping a sequence $A$ to the maximal $r$ such that $B_r(x)\subseteq\Sigma_\R^1(A)$ for some lattice point $x$ is continuous, it suffices to show that for every admissible real sequence $A$ we have that $\Sigma_\R^1(A)$ contains an interior lattice point.

For the proof it suffices to consider configurations such that $\Sigma_\R^1(A)$ is minimal with respect to $\subseteq$ among all real sequences $A$ satisfying these conditions. In the sequel we will only consider such minimal configurations.
\begin{Lem}
\label{Lem:DReplace}
Suppose there is a counterexample to Proposition~\ref{Prop:Dreal}. Then there is a counterexample for which the following holds true.
Suppose that $v_1, v_2, v_3, v$ are distinct lattice points with $\mu(v_1), \mu(v_2), \mu(v_3)$ posisitive, and suppose that $v$ lies in the
interior or on the boundary of the triangle with vertices $v_1, v_2,
v_3$. Then $\mu(v)=1/2$, or $v$ lies on a line $\ell$ which contains $3/2$ points. Furthermore, if $v$ is on the boundary of the triangle $v_1v_2v_3$, then $\ell$ is not the line supporting the side of the triangle containing $v$.
\end{Lem}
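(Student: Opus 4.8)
The plan is to prove the lemma by a single weight--shifting operation together with an extremal choice of the counterexample. I would select an admissible counterexample $A$ to Proposition~\ref{Prop:Dreal} minimizing the strictly convex potential $\Phi(A)=\sum_i\mu_i\,|a_i|^2$; since the admissible configurations with support in a fixed box form a compact set and $\Phi$ is continuous, such a minimizer exists, and since the move below only shrinks $\Sigma_\R^1$ we may moreover take $A$ to be minimal for $\subseteq$, as in the convention fixed above. The claim is that this minimizer already has the stated property, any violation being removable by the following move. Given distinct lattice points $v_1,v_2,v_3,v$ with $\mu(v_1),\mu(v_2),\mu(v_3)>0$ and $v$ in the closed triangle $v_1v_2v_3$, write $v=\lambda_1v_1+\lambda_2v_2+\lambda_3v_3$ with $\lambda_i\ge 0$, $\sum_i\lambda_i=1$, and for small $\epsilon>0$ define $A'$ by $\mu'(v)=\mu(v)+\epsilon$ and $\mu'(v_i)=\mu(v_i)-\epsilon\lambda_i$, all other multiplicities unchanged.

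The heart of the argument, and the step I expect to require the most care, is the inclusion $\Sigma_\R^1(A')\subseteq\Sigma_\R^1(A)$: shifting weight from the vertices onto the interior point never enlarges the set of length-$1$ subset sums. To see it, take $s\in\Sigma_\R^1(A')$ realized by weights $t_a\in[0,\mu'(a)]$ with $\sum_a t_a=1$, put $x=\max(0,\,t_v-\mu(v))\le\epsilon$, decrease the weight used at $v$ by $x$, and increase the weight used at each $v_i$ by $x\lambda_i$. Since $v=\sum_i\lambda_iv_i$ and $\sum_i\lambda_i=1$, both the total weight and the resulting sum are unchanged, so the new weights still represent $s$ with total weight $1$; and the bookkeeping $t_{v_i}\le\mu(v_i)-\epsilon\lambda_i$ together with $x\le\epsilon$ shows the new weight at $v_i$ is at most $\mu(v_i)$, so the representation is legal for $A$. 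Hence $s\in\Sigma_\R^1(A)$, as desired.

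Granting this, I would finish as follows. If in addition $\mu(v)<\tfrac12$ and $v$ lies on no line carrying $\tfrac32$ points of $A$, then $A'$ is again admissible for small $\epsilon$: multiplicities only rise at $v$, where there is room below $\tfrac12$, and only fall at the $v_i$, while the only lines gaining weight are those through $v$, each carrying strictly fewer than $\tfrac32$ points. By the inclusion above $\Sigma_\R^1(A')\subseteq\Sigma_\R^1(A)$ contains no lattice point in its interior, so $A'$ is again an admissible counterexample; but strict convexity gives $|v|^2<\sum_i\lambda_i|v_i|^2$ since $v$ is distinct from the vertices, whence $\Phi(A')=\Phi(A)+\epsilon\bigl(|v|^2-\sum_i\lambda_i|v_i|^2\bigr)<\Phi(A)$, contradicting the choice of $A$. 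This forces $\mu(v)=\tfrac12$ or $v$ on some $\tfrac32$-line $\ell$. Finally, if $v$ lies on the boundary, say on the side $v_1v_2$, then $\lambda_3=0$ and $v,v_1,v_2$ are collinear on the supporting line $\ell_0$; the move keeps all transferred weight on $\ell_0$, so the count on $\ell_0$ is unchanged and only the remaining lines through $v$ gain weight. The same contradiction therefore applies unless the saving line $\ell$ is different from $\ell_0$, which is precisely the asserted refinement.
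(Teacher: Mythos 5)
Your proof is correct, and its core mechanism is the same as the paper's: shift weight from the vertices $v_1,v_2,v_3$ onto $v$, check that this only shrinks $\Sigma_\R^1$ (your explicit re-routing argument with $x=\max(0,t_v-\mu(v))$ proves the inclusion the paper merely asserts), and observe that admissibility can only fail if $\mu(v)=\tfrac12$ or a $\tfrac32$-line passes through $v$, with the side's supporting line exempted in the boundary case because it gains no net mass. Where you genuinely differ is in how a fully saturated counterexample is produced. The paper performs the move with the maximal admissible transfer $m$ and concludes with ``repeating this process for all interior points in the convex hull of $A$ our claim follows'' --- an iteration whose termination is left informal --- and it also leans on the convention, fixed before the lemma, of choosing a counterexample minimal for $\subseteq$, whose existence is never justified. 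You instead minimize the strictly convex potential $\Phi(A)=\sum_i\mu_i|a_i|^2$ over the set of counterexamples and show that any violation strictly decreases $\Phi$ (the strict inequality following from the variance identity $\sum_i\lambda_i|v_i|^2-|v|^2=\sum_i\lambda_i|v_i-v|^2>0$, valid because $v$ is distinct from the $v_i$), so the minimizer already has the stated property; this disposes of the iteration and of $\subseteq$-minimality in one stroke. The only external input your argument needs is that the counterexamples form a nonempty compact set, and this is exactly what the paper's reduction preceding the lemma supplies: admissible sequences supported in the fixed box form a compact set, and the radius function is continuous, so the counterexamples $\{A: r(A)=0\}$ are closed. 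One small editorial point: the remark in your first paragraph that you ``may moreover take $A$ to be minimal for $\subseteq$'' is never used in your argument and should be dropped --- avoiding that unproved existence claim is precisely what your variational setup buys you.
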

\begin{proof}
Suppose this is not true. Let $\mathcal{L}$ be the set of all lines passing through $v$, and put $m=\min(\mu(v_1), \mu(v_2), \mu(v_3), \frac{1}{2}-\mu(v), \frac{3}{2}-\mu(g):g\in\mathcal{L})$. By assumption we have that $m$ is positive. There exist non-negative real numbers $s_1. s_2, s_3$, such that $s_1+s_2+s_3=1$ and $s_1v_1 + s_2v_2+s_3v_3=v$. Form the real sequence $A'$ by reducing the multiplicity of $v_i$, $\nu=1,2,3$ by $s_im$, and increasing the multiplicity of $v$ by $m$. By our choice of $m$ we have that $A'$ satisfies the assumptions of the lemma. Furthermore any subsequence sum of length 1 in $A'$ can be represented as a subsequence sum of length 1 in $A$, hence $\Sigma_\R^1(A')\subseteq\Sigma_R^1$, hence, if $A$ is a counterexample so is $A'$. Repeating this process for all interior points in the convex hull of $A$ our claim follows.

If $v_1, v_2, v$ are on one line $\ell$, such that $\mu(\ell)=\frac{3}{2}$, and neither $\mu(v)=\frac{1}{2}$ nor there exists a second line $\ell'$ passing through $v$ with $\mu(\ell')=\frac{3}{2}$, then building the sequence $A'$ as above does not change $\mu(\ell)$, as $\mu(v)$ is increased y exactly the same amount $\mu(v_1)+\mu(v_2)$ is decreased, hence, t$A'$  still satisfies the conditions of the proposition, and we can argue as before.
\end{proof}

\begin{Lem}
\label{Lem:DPrimitive}
Suppose that $v_1, v_2$ are lattice points with $t_1, t_2=1/2$, and assume that $v_1-v_2$ is not 
primitive. Then $S$ contains an interior lattice point.
\end{Lem}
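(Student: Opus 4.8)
The plan is to derive a contradiction with the assumption that $A$ is a minimal counterexample to Proposition~\ref{Prop:Dreal} by producing a lattice point in the interior of $S=\Sigma_\R^1(A)$. First I would normalise: since $v_1-v_2$ is not primitive, write $v_1-v_2=dw$ with $w$ primitive and $d\ge 2$, and apply a unimodular transformation preserving $\Z^2$ so that $w=(1,0)$ and $v_2=(0,0)$; then $v_1=(d,0)$, and the open segment between $v_1$ and $v_2$ contains the lattice points $(1,0),\dots,(d-1,0)$. The natural target is the lattice point nearest the midpoint: if $d$ is even then $m=\frac12 v_1+\frac12 v_2=(d/2,0)$ is itself a lattice point and lies in $S$ (put weight $\frac12$ on each of $v_1,v_2$), while if $d$ is odd I would instead aim at $\left(\frac{d\pm1}{2},0\right)$. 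The hypothesis $\mu(v_1)=\mu(v_2)=\frac12$ is exactly what makes $m$ reachable with weight $1$, and non-primitivity is exactly what guarantees a lattice point near the \emph{centre} of the segment rather than only at its endpoints.

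The key step is to show that $S$ contains a full-dimensional neighbourhood of the target point on the side of the remaining mass. Write $A''=A\setminus\{v_1,v_2\}$, so $|A''|=3.99-1=2.99$. Any point obtained by putting weight $\frac12-s$ on each of $v_1,v_2$ and distributing a further weight $2s$ over $A''$ lies in $S$, and it equals $(1-2s)m+2s\,q$, where $q$ ranges over $\mathrm{conv}(\mathrm{supp}\,A'')$ for all sufficiently small $s>0$. Hence $S$ contains the truncated cone with apex $m$ over the base $\mathrm{conv}(\mathrm{supp}\,A'')$. Since the interior of any full-dimensional subset of $S$ is contained in the interior of $S$, it suffices to place a lattice point in the interior of this cone. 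Because the maximal multiplicity is $\frac12$ and no line carries more than $\frac32$ points while $|A|=3.99$, the support of $A''$ is genuinely two-dimensional and spread over several lines; this makes the cone a non-degenerate planar region whose transverse extent exceeds one lattice spacing, and the length $\ge 2$ of the segment furnished by $d\ge 2$ supplies the room along $w$. Together these should force a lattice point into the interior.

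The main obstacle is the degenerate configuration in which all of the remaining mass lies on one closed side of the line $\ell$ through $v_1$ and $v_2$. Then $S\subseteq\{y\ge 0\}$, the target $m$ sits on the boundary edge $S\cap\{y=0\}$, and the midpoint is useless; the interior lattice point must instead be sought strictly off $\ell$, at height $y=1$. Here I would use that the off-line mass is at least $3.99-\frac32=2.49$ and, by the no-line-$>\frac32$ condition, cannot be concentrated on the line $y=1$, so $S$ reaches height at least $\frac32$ and the horizontal slice $S\cap\{y=1\}$ is a genuine segment lying strictly between the base and the top of $S$. The crux is to bound the length of this slice below by $1$, so that it contains an integer abscissa; this is where the spread hypotheses, together with the structure of a minimal counterexample from Lemma~\ref{Lem:DReplace} (which, after a short recursion on $d$, lets me assume the intermediate lattice points on $\ell$ carry multiplicity $\frac12$ and thereby widens the relevant slices), do the real work. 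Establishing this width bound, and disposing of the few remaining low, thin configurations by direct inspection, is the part I expect to be most delicate.
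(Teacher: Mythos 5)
Your skeleton does coincide with the paper's: normalise so that $v_2=(0,0)$, $v_1=(k,0)$ with $k\geq 2$; settle the case where $A$ has mass on both sides of the line $y=0$; and in the one-sided case hunt for a lattice point in a horizontal chord of $S$ at height $y=1$. But the two steps that constitute the actual mathematical content of the lemma are exactly the ones you defer: (i) the lower bound of $1$ on the length of the chord $S\cap\{y=1\}$, and (ii) ruling out the configuration where that chord has integral endpoints lying on the boundary of $S$. Declaring these "delicate" and "to be disposed of by direct inspection" leaves the lemma unproven, since everything preceding them is routine convexity. Your two-sided case also has an unjustified step: the claim that the cone over $\mathrm{conv}(\mathrm{supp}\,A'')$ has "transverse extent exceeding one lattice spacing" does not follow from the stated hypotheses (the admissible cone parameter $s$ is limited by the individual multiplicities in $A''$, which may be tiny), and for odd $d$ your target $\left(\frac{d\pm1}{2},0\right)$ is not obviously in $S$ at all, since the weights on $v_1,v_2$ are capped at $\frac12$ each.

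All of these holes are closed in the paper by one application of Lemma~\ref{Lem:DReplace}, not by a recursion or an inspection of cases. Applying it to $v=(1,0)$ (which lies on the segment $v_1v_2$, with any off-line point of positive multiplicity as third vertex of the triangle) gives: either $(1,0)$ lies on a line transversal to $y=0$ carrying $3/2$ points, and an interior lattice point is found on that line directly, or $\mu((1,0))=\frac12$. In the latter case the line $y=0$ carries $(0,0),(1,0),(k,0)$, each of multiplicity $\frac12$, so weight $\frac12$ distributed among them realises every horizontal offset in $[0,k/2]\supseteq[0,1]$; this is both the reason $(1,0)$ is interior in the two-sided case and the source of the unit width in the one-sided case. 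For the latter: the off-line mass is at least $3.99-\frac32=2.49$, of which at most $\frac32$ sits on $y=1$, so at least $0.99\geq\frac12$ sits at height $\geq 2$; hence some $(u,1)$ is a subsequence sum of weight $\leq\frac12$, and adding the on-line weight $\frac12$ places the whole segment from $(u,1)$ to $(u+1,1)$ inside $S$. Since $S$ also contains points at height $0$ and at height $\geq 1.98$, the relative interior of this segment lies in the interior of $S$, so an interior lattice point exists unless $u\in\Z$ and both $(u,1)$ and $(u+1,1)$ lie on $\partial S$. That last configuration forces all mass at height $\geq 2$ to sit at a single point $(t,2)$ of multiplicity $\frac12$, whence the two lines $y=0$ and $y=1$ carry $3.99-\frac12=3.49>3$ points, contradicting the hypothesis that no line carries more than $\frac32$. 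Without these two quantitative arguments your text is a correct outline of the proof, not a proof.
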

\begin{proof}
Without loss we may assume that $v_1=(0, 0)$, $v_2=(k, 0)$ with $k\geq 2$. By Lemma~\ref
{Lem:DReplace} we have that $(1, 0)$ has multiplicity $1/2$ as well, or is on a line different from 
$y=0$ containing 3/2 points. In the latter case we immediately see that $S$ contains an interior 
lattice point on this line, hence, from now on we assume that $(1, 0)$ has multiplicity 1/2. If there 
are points $a, b$ with positive multiplicity such that $a$ has positive, and $b$ has negative 
second coordinate, then $(1, 0)$ is in the interior of $S$, and we are done. There are at least 1/2 
points with second coordinate $\geq 1/2$, hence, we can represent some point $(u, 1)$ as a 
subset sum of such points of length $\leq 1/2$. Adding 1/2 copies of points on the line $y=0$ we 
find that the segment $(u, 1)$ to $(u+1, 1)$ is contained in $S$, and this segment contains an 
interior lattice point, unless $u$ is integral. If $u$ is a lattice point, we are stizlle done, unless $(u, 
1)$ and $(u+1, 1)$ are both on the boundary of $S$. But this is only possible if all points with 
positive multiplicity and second coordinate $\geq 2$ are equal to one such point $(t, 2)$ with 
multiplicity 1/2. But then $A$ contains 13/4 points on two lines, contradicting the assumption that 
no line contains more than 3/2 points.
\end{proof}
\begin{Lem}
\label{Lem:Dnoline}
Suppose there exists a line $\ell$ containing 3/2 points. Then $S$ contains an interior lattice point.
\end{Lem}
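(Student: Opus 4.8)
The plan is to normalise by a unimodular (lattice-preserving) transformation so that $\ell=\{y=0\}$, and then to clear away all but one genuinely two-dimensional configuration using the two lemmas already in hand. Throughout, recall that $S=\Sigma_\R^1(A)$ is a convex polygon, so I must exhibit a lattice point in its topological interior.

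First I would extract the combinatorics forced by the hypotheses. Since $\ell$ carries weight $3/2$ while every multiplicity is $\le 1/2$, the support meets $\ell$ in at least three distinct lattice points, spanning a segment $[m,M]\times\{0\}$ with $M-m\ge 2$. If three of these points have multiplicity exactly $1/2$ — which is \emph{forced} as soon as $M-m=2$, since then only the positions $m,m+1,m+2$ are available and each must carry $1/2$ — then the two extreme full points differ by the horizontal vector $(M-m,0)$ with $M-m\ge 2$, which is non-primitive, and Lemma~\ref{Lem:DPrimitive} already produces an interior lattice point. Hence I may assume $\ell$ carries at most two points of multiplicity $1/2$; a short weight count (two full points contribute at most $1$, and the remaining $\ge 1/2$ must sit on points of multiplicity $<1/2$) then upgrades the spread to $M-m\ge 3$. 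After possibly reflecting, I split on whether the support meets both open half-planes $y>0$ and $y<0$, or lies in $\{y\ge 0\}$.

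The conceptual point, which dictates the whole strategy, is that the naive one-dimensional argument fails: one cannot simply say that a long chord of $S$ must contain a lattice point in its relative interior. Indeed the extreme lattice points of a chord are attained only by concentrating the admissible weight, and since the total weight in any representation is pinned to $1$, a chord can collapse to a segment of length exactly $1$ with lattice endpoints, whose relative interior then contains no lattice point. So the argument must genuinely use the off-line mass to fatten $S$ in two dimensions. Concretely, the interiority criterion I will use is: a lattice point $x$ lies in the interior of $S$ as soon as it has a weight-$1$ representation $x=\alpha p+\beta q+\gamma r+\cdots$ with $p,q,r$ affinely independent and $\alpha<\mu(p)$, $\beta<\mu(q)$, $\gamma<\mu(r)$, since strict slack in three independent directions lets one perturb $x$ throughout a two-dimensional neighbourhood.

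The constructive heart is therefore to locate such an $x$ in the lowest integer slice that is pinned between support below and above. When the support lies in $\{y\ge 0\}$ this slice is $y=1$: the constraint that no line carries more than $3/2$ points leaves at least $2.49-3/2=0.99$ units of mass strictly above $y=1$, so $S$ reaches height $\ge 2$ and the slice $S\cap\{y=1\}$ is sandwiched between base mass below and this mass above; when the support already meets both half-planes the analogous (easier) slice is $y=0$. In either case the base spread $M-m\ge 3$, \emph{after discounting the unavoidable shrinkage coming from the weight-$1$ normalisation}, must be shown to leave the horizontal slice of length strictly greater than $1$, so that it contains a lattice point in its relative interior; that point then inherits a slack representation and is interior to $S$. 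I expect the main obstacle to be exactly this quantitative length bound: tracking how much of the spread $M-m\ge 3$ survives the weight-$1$ constraint and the particular distribution of the $\ge 0.99$ forced mass, and using the tight constants $3.99,\ 3/2,\ 1/2$ to push the length past $1$. The residual degenerate configurations (slices whose endpoints happen to be lattice points, or the appearance of a second full line) I would absorb by invoking Lemma~\ref{Lem:DReplace}, which makes interior lattice points full or places them on a $3/2$-line, together with Lemma~\ref{Lem:DPrimitive}.
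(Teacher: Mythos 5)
Your opening reduction is sound and is in fact a nice observation the paper does not make: three points of multiplicity $\frac12$ on $\ell$ would include two whose difference is a horizontal vector $(k,0)$ with $k\ge 2$, hence non-primitive, so Lemma~\ref{Lem:DPrimitive} finishes that case, and your weight count then gives spread $M-m\ge 3$. But the proposal stops exactly where the proof has to happen. You write that the horizontal slice of $S$ at height $1$ (resp.\ $0$) ``must be shown'' to have length strictly greater than $1$, call this the main obstacle, and defer it --- that deferred step \emph{is} the lemma. Worse, the quantitative claim you hope to extract from the spread $M-m\ge 3$ is false if only the line mass is used: the configuration with multiplicities $\frac14,\frac12,\frac12,\frac14$ at $(0,0),(1,0),(2,0),(3,0)$ has spread exactly $3$, yet its weight-$1$ sums form exactly the segment from $(1,0)$ to $(2,0)$ --- length exactly $1$ with lattice endpoints, precisely the degenerate case you need to exclude. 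So no bookkeeping with the constants $3.99$, $\frac32$, $\frac12$ along the line alone can push the slice length past $1$; the off-line mass must enter through a concrete mechanism, and your proposal never produces one.

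The paper supplies two such mechanisms, one per case, and neither is the kind of thing Lemmas~\ref{Lem:DReplace} and \ref{Lem:DPrimitive} can ``absorb''. In the one-sided case (all of $A$ in $\{y\ge 0\}$) it partitions the line mass into three consecutive blocks $A_1,A_2,A_3$ of weight $\frac12$ with sums $s_1,s_2,s_3$, uses the $\ge 0.99$ units of mass at height $\ge 2$ to reach height $1$ within weight $\frac12$, then adds convex combinations of $s_1$ and $s_3$ (another weight $\frac12$) to get an interval of length $\ge |s_3-s_1|\ge 1$ at height $1$; the degenerate subcase (length exactly $1$, lattice endpoints) is killed structurally: it forces $A$ onto the union of two lines, whence $|A|\le 3<3.99$. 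In the two-sided case, the paper identifies the forced $\frac14,\frac12,\frac12,\frac14$ configuration and perturbs explicitly: taking $(x,0)$ with $x\le\frac32$ on $\ell$ in the convex hull of the off-line support (it exists because off-line mass lies on both sides of $\ell$) and a small $c\in\left(0,\frac14\right)$, the combination $\frac14(0,0)+\frac12(1,0)+c(x,0)+\left(\frac14-c\right)(2,0)=(1-(2-x)c,\,0)$ lies in $\Sigma_\R^1(A)$ and extends the slice strictly past the lattice endpoint $(1,0)$, which is then interior. Your fallback plan does not substitute for this: Lemma~\ref{Lem:DReplace} only converts points lying in triangles of the support into full points or $\frac32$-lines on a minimal counterexample, and nothing in your setup shows how that disposes of a length-$1$ slice with lattice endpoints. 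Until these two mechanisms (or genuine substitutes) are written out, what you have is a correct framing of the difficulty, not a proof.
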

\begin{proof}
If  there exists a point $x\in \ell$ and some $\epsilon>0$, such that $B_\epsilon(x)\cap\ell\subseteq\Sigma_\R^1(\ell)$. Then there exists some $\delta>0$, such that $B_\delta(x)\subseteq \Sigma_\R^1(A)$, unless all elements of $A$ are on one side of $\ell$. We now assume that this is true, without loss of generality we assume that $\ell=\{(x,0)\}$ and that $A$ is supported on $\{x,y):y\geq 0\}$. We can partition $A\cap\ell$ into three subsets $A_1, A_2, A_3$, such $\mu(A_i)=\frac{1}{2}$ that for $(x_i, 0)\in A_i$ we always have $x_1\leq x_2\leq x_3$. Let $s_i$ be the sum of all elements in $A_i$. Then the convex hull of $\{s_1, s_3\}$ is an interval of length at least 1. As there are at most 3 elements with second coordinate 0 or 1, there are at least $0.99$ points with second coordinate $\geq 2$. Put $S=\Sigma_\R^{1/2}(A\cap\{(x,y):y\geq 2\})$. Then $S$ is a convex subset of $\{(x,y):y\geq 1\}$. By forming a convex combination with elements in $A_2$, we see that $\Sigma_\R^{1/2}((A\cap\{(x,y):y\geq 2\})\cup A_2)$ contains the convex hull of $S$ and $s_2$. Adding a convex combination of $s_1$ and $s_3$ to an element of $\Sigma_\R^{1/2}((A\cap\{(x,y):y\geq 2\})\cup A_2)$ we obtain that $\Sigma_\R^1(A)$ contains an interval of length at least 1 on the line $\{(x,1)\}$. As $\Sigma_\R^1(A)$ contains points above and below this line, we obtain an interior lattice point, unless this interval has length exactly 1, and its endpoints are lattice points. This is only possible if at most one point on the line $\{(x, 1)\}$ has positive multiplicity, and $A\cap\{(x,y):y\geq 2\}$ is supported on a single line. passing through this point. However, then $A$ is supported on the union of two lines, contradicting $|A|>3$. 

Now assume that there is no lattice point $x$ as above, and $\ell$ intersects the interior of the convex hull of the support of $A$. As $|A|=\frac{3}{2}$ and the maximal multiplicity is $\leq\frac{1}{2}$, this is only possible if there are four points with positive multiplicity on this line, and the multiplicities of these points are $\frac{1}{4}, \frac{1}{2}, \frac{1}{2}, \frac{1}{4}$, we may assume that these points are $(0,0) - (3,0)$. 

Let  $(x, 0)$ be an element in the convex hull of the support of $A\setminus\ell$. Such an $x$ exists, and by symmetry we may assume that $x\leq\frac{3}{2}$. Then there is some $c\in(0,\frac{1}{4})$, such that $(x,0)\in\Sigma_\R^c(A\setminus\ell)$, thus, 
\[
\frac{1}{4}\cdot (0,0) + \frac{1}{2}(1,0) + c(x,0) + \left(\frac{1}{4}-c\right)(2,0) = (1-(2-x)c, 0)
\]
is in $\Sigma_\R^1(A)$, and $(1,0)$ is an interior lattice point.
\end{proof}

\begin{Lem}
There exist three affinely independent elements of $A$ which have multiplicity $\frac{1}{2}$.
\end{Lem}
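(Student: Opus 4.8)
The plan is to work with a minimal counterexample $A$ to Proposition~\ref{Prop:Dreal} and show that the points of multiplicity $\tfrac12$ already carry enough affine structure. First observe that in a minimal counterexample no line carries multiplicity $\tfrac32$, since otherwise Lemma~\ref{Lem:Dnoline} would produce an interior lattice point in $\Sigma_\R^1(A)$. Consequently the line alternative in Lemma~\ref{Lem:DReplace} never occurs, and that lemma reduces to the assertion that every lattice point lying in a triangle spanned by three points of positive multiplicity has multiplicity $\tfrac12$. Writing $H$ for the convex hull of the support of $A$, this yields the key structural fact: \emph{every non-vertex lattice point of $H$ has multiplicity $\tfrac12$}. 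Indeed, an interior lattice point lies in a nondegenerate triangle of support points, and a lattice point in the relative interior of an edge lies on the boundary of the triangle formed by the two edge endpoints and any support point off that edge's line, which exists because $|A|=3.99>\tfrac32$ forces mass off every single line.

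Now suppose for contradiction that there are no three affinely independent points of multiplicity $\tfrac12$, i.e. all of them lie on one line $\ell$. Since $\ell$ carries multiplicity $<\tfrac32$, it holds at most two such points, so $A$ has at most two points of multiplicity $\tfrac12$, and hence $H$ has at most two non-vertex lattice points. (This is the only case to treat: if $H$ had three or more non-vertex lattice points, no three of them could be collinear, since three collinear $\tfrac12$-points would give multiplicity $\tfrac32$ on a line, so any three would be affinely independent and we would be done.) On the other hand $|A|=3.99$ together with maximal multiplicity $\le\tfrac12$ forces $H$ to contain at least $8$ lattice points, so $H$ has at least $6$ vertices, and the points of multiplicity strictly below $\tfrac12$, which are all vertices, carry total multiplicity at least $3.99-1=2.99$.

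Next I would pin down the shape of $H$ using standard facts about lattice polygons together with the bound ``at most two non-vertex points''. If $H$ has no interior lattice point, then up to a unimodular transformation it lies in a strip of lattice width $1$ (the finitely many exceptions being even smaller), whence $V\le 4$, contradicting $V\ge 6$; so $H$ has an interior point. If $H$ has exactly one interior point it is reflexive, so $V\le 6$, and with $V\ge 6$ it is the all-primitive hexagon with no other lattice points, so $H$ has only $7$ lattice points, contradicting $\ge 8$. This leaves exactly the case of two interior lattice points $q_1,q_2$, both of multiplicity $\tfrac12$, with all edges primitive and $6\le V\le 8$; the bound $V\le 8$ follows from a parity argument, since if three vertices shared a residue class modulo $2$ their three pairwise midpoints would be distinct non-vertex lattice points, whereas there are only two such points.

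The remaining case is the main obstacle, and it must be closed using the multiplicity budget rather than lattice geometry alone, since convex lattice polygons with $V=6$, two interior points and all edges primitive genuinely exist. By Lemma~\ref{Lem:DPrimitive}, $q_1-q_2$ is primitive, so after a lattice-preserving transformation $q_1=(0,0)$ and $q_2=(1,0)$. Because the only interior points lie on $y=0$ and $H$ has no edge-interior lattice points, every line $y=k$ with $k\ne 0$ meets $H$ in a chord whose only lattice points are its two endpoints, and these are vertices; meanwhile the line $y=0$ already carries multiplicity $1$ from $q_1,q_2$, so the at most two vertices on it contribute less than $\tfrac12$. The vertex mass $2.99$ must therefore be packed into these sparse off-axis rows, and one shows this is impossible while keeping every line below multiplicity $\tfrac32$. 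Making this final counting airtight, by bounding the vertical extent of $H$ or equivalently running through the finite list of lattice polygons with exactly two interior points, is where the real work lies; everything preceding it is bookkeeping.
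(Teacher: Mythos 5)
Your argument is sound up to and including the parity bound $V\le 8$: the reduction via Lemma~\ref{Lem:Dnoline} to ``no line carries $3/2$'', the structural fact that every non-vertex lattice point of the hull $H$ has multiplicity $\frac{1}{2}$, the count of at least $8$ support points, and the elimination of the cases of zero or one interior lattice point are all correct. But the proof then stops exactly where the lemma's difficulty is concentrated: the two-interior-point case is never closed. ``One shows this is impossible'' and ``is where the real work lies'' are admissions, not arguments, and this case is not dismissible as bookkeeping. Hexagons with exactly two interior points and all boundary points vertices genuinely exist (e.g.\ the hull of $(-1,0),(0,1),(1,1),(2,0),(1,-1),(0,-1)$, with interior points $(0,0)$ and $(1,0)$), so, as you yourself observe, lattice geometry alone cannot produce the contradiction; and the multiplicity count that is supposed to produce it is never performed. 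The routes you suggest (bounding the vertical extent of $H$, or enumerating all polygons with two interior points) would still leave you checking, polygon by polygon, whether masses summing to $2.99$ with every point under $\frac{1}{2}$ and every line under $\frac{3}{2}$ can be placed on the vertices; nothing in the proposal shows that this check must fail. So there is a genuine gap.

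The missing idea is the one the paper's own proof is built on: run the pigeonhole modulo $2$ over the \emph{whole support}, interior points included, not merely over triples of hull vertices. In your final case the support has at least $8$ points and, by your midpoint argument (which applies verbatim to arbitrary support points, not just vertices), at most two per residue class mod $2$; hence exactly $8$ points, two per class. The classmate $u$ of $(0,0)$ gives a lattice midpoint $u/2$, a non-vertex point of $H$, which by your structural fact has multiplicity $\frac{1}{2}$ and therefore equals $(0,0)$ or $(1,0)$; thus $u=(2,0)$, and likewise the classmate of $(1,0)$ is $(-1,0)$. Now the line $y=0$ carries $1+\mu((-1,0))+\mu((2,0))<\frac{3}{2}$, so these two vertices carry less than $\frac{1}{2}$ together, while the remaining four support points carry at most $\frac{1}{2}$ each; hence $|A|<1+\frac{1}{2}+2=\frac{7}{2}<3.99$, the desired contradiction, and your case closes in a few lines. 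For comparison, the paper never introduces convex hulls or the classification of reflexive polygons: it pigeonholes the at least $8$ support points mod $2$ at the outset, extracts multiplicity-$\frac{1}{2}$ midpoints from Lemma~\ref{Lem:DReplace}, and in the only delicate configuration (four pairs sharing a common midpoint) exhibits $(0,0)$ as an interior point of $\Sigma_\R^1(A)$. Your hull-and-classification detour is correct as far as it goes, but it is heavier machinery, and it founders on precisely the idea it was meant to replace.
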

\begin{proof}
If there are three elements in $A$ which have multiplicity $\frac{1}{2}$, but which are not independent, then they lie on a line, which has multiplicity $\frac{3}{2}$, but we have already seen that this is impossible. Hence, it suffices to show that there are at least three elements with multiplicity $\frac{1}{2}$.

If there are 9 different points with positive multiplicity, then there are three points $v_1, v_2, v_3$, which are congruent to each other modulo 2. If these points are not on one line, then the midpoints $m_1, m_2, m_3$ of the line segments between these points are not on a line either, and by Lemma~\ref{Lem:DReplace} each of $m_1, m_2, m_3$ has multiplicity $\frac{1}{2}$, and we are done. If $v_1, v_2, v_3$ are all on a line, and $v_2$ is between $v_1, v_2$, then the midpoints of $v_1v_2$, $v2$, and the midpoint of $v_2v_3$ are three different points each of which has multiplicity $\frac{1}{2}$, which is impossible. Hence, in this cae our claim holds.

If there are 8 different poitns with positive multiplicity, then, as each point has multiplicity $\leq\frac{1}{2}$, and the total multiplicity is $3.99$, we have that each point has multiplicity $\geq 0.49$. If among the 8 points there are three which are equal modulo 2, we argue as in the case of nine points, hence, we may assume that the 8 points form 4 pairs which are congruent modulo 2. The midpoints of the four segments connecting the points in a pair cannot be all distinct, for otherwise we would have 4 points with multiplicity $\frac{1}{2}$. Hence, there is some point which is the midpoint of two of these segments. After a suitable linear transformation we find that we can assume that each of $(-1,0), (0,0), (1,0), 0,-1), (0,1)$ occurs with multiplicity $\geq 0.49$. But then $(0,0)$ is an interior point of $\Sigma_\R^1(A)$, and we are done.
\end{proof}
We may assume that $(0,0), (1,0),$ and $(0,1)$ occur with multiplicity $\frac{1}{2}$. If tow further points with multiplicity $\frac{1}{2}$, there would be two points with multiplicity $\frac{1}{2}$ which are congruent modulo 2, which contradicts Lemma~\ref{Lem:DPrimitive}. If there is one further point with multiplicity $\frac{1}{2}$, it must be congruent to $(1,1)$ modulo 2, and the convex hull of $(0,0), (1,0), (0,1)$ and this fourth point does not contain any other lattice point. This is only possible if the fourth point is $(\pm 1,\pm 1)$. If this point is $(-1,-1)$, then $(0,0)$ is an interior point of $\Sigma_\R^1(A)$, and the other three cases are equal up to a linear transformation. We can therefore assume that either there is no further point with multiplicity $\frac{1}{2}$, or this point is $(1,1)$. We now distinguish three cases depending on the multiplicity of $(1,1)$.

\begin{Lem}
\label{Lem:no four}
Suppose that $(0,0), (1,0), (0,1)$ and $(1,1)$ occur with multiplicity $\frac{1}{2}$. Then one of $(0,0), (1,0), (0,1), (1,1)$ is an interior point of $\Sigma_\R^1(A)$.
\end{Lem}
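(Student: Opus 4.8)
The plan is to translate interiority into a statement about the support function of $\Sigma^1_\R(A)$ and then to derive a contradiction from the assumption that \emph{none} of the four corners is interior. For a direction $u\in\R^2$ write $h(u)=\max\{\langle u,x\rangle:x\in\Sigma^1_\R(A)\}$. Since $\Sigma^1_\R(A)$ is the image of the polytope of admissible weight vectors under a linear map it is convex, and $h(u)$ is computed by the obvious fractional knapsack: order the points of $A$ by the value of $\langle u,\cdot\rangle$ and accumulate total weight $1$ greedily, respecting the caps $\mu_i\le 1/2$. A point $x_0\in\Sigma^1_\R(A)$ is interior precisely when $\langle u,x_0\rangle<h(u)$ for every $u\neq 0$.

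First I would dispose of the easy directions for a fixed corner, say $(1,1)$. If $u$ is not in the closed first quadrant, then one of $(0,0),(1,0),(0,1)$, call it $c$, satisfies $\langle u,c\rangle>\langle u,(1,1)\rangle$ (e.g.\ $c=(0,1)$ when $u_1<0$, since then $\langle u,c\rangle-\langle u,(1,1)\rangle=-u_1>0$); filling the remaining half-unit of weight with $(1,1)$ itself gives $h(u)\ge\tfrac12\langle u,c\rangle+\tfrac12\langle u,(1,1)\rangle>\langle u,(1,1)\rangle$. Hence $(1,1)$ can fail to be interior only through some $u$ in the open first quadrant, where $(1,1)$ uniquely maximises $\langle u,\cdot\rangle$ over the four corners. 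For such $u$ the greedy computation shows $h(u)>\langle u,(1,1)\rangle$ as soon as the open half-plane $\{x:\langle u,x\rangle>\langle u,(1,1)\rangle\}$ carries at least $1/2$ of the mass of $A$, because then the forced second half-unit of weight can be taken entirely beyond the supporting line. Thus non-interiority of $(1,1)$ yields an outward half-plane $H_{NE}$, bounded by a line through $(1,1)$, carrying less than $1/2$ of the mass of $A$; the symmetric statements produce $H_{NW},H_{SE},H_{SW}$ for the other corners.

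Assume now, for contradiction, that all four corners are non-interior, giving four outward half-planes each of mass $<1/2$. The complement of their union is the convex quadrilateral $\Pi$ cut out by the four supporting lines, each passing through a corner, so $\Pi\supseteq[0,1]^2$ and at least $3.99-4\cdot\tfrac12=1.99$ of the mass lies in $\Pi$. Here I bring in the structure of the section: since the only lattice points in the unit square are the corners, all of the remaining $1.99$ units of mass sit at lattice points off the square. By Lemma~\ref{Lem:Dnoline} we may assume no line carries exactly $3/2$ points, so each of the six lines joining two corners carries strictly less than an additional $1/2$; in particular the two diagonals $y=x$ and $x+y=1$, which govern the doubly dominating points such as $(2,2)$ and $(2,-1)$, are mass-limited. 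By minimality together with Lemma~\ref{Lem:DReplace}, every point of $A$ other than the four corners is a vertex of the convex hull of the support. The remaining task is to show that $1.99$ units of mass, placed at such hull vertices inside $\Pi$, subject to the six line constraints, cannot keep all four outward half-planes below mass $1/2$.

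The combination step is the main obstacle, and it is genuinely quantitative. A single lattice point dominating a corner in both coordinates, such as $(2,2)$ for $(1,1)$, lies beyond \emph{every} first-quadrant supporting line of $(1,1)$ and would by itself nearly force interiority; but such points lie on a diagonal and are therefore capped strictly below $1/2$ by the no-$3/2$-line hypothesis, and the value $3.99$ is calibrated to make this cap tight. I expect to finish by using minimality to reduce to finitely many combinatorial types for the positions of the hull vertices relative to the nine regions determined by the lines $x=0,x=1,y=0,y=1$: the ``edge-strip'' lattice points all lie on the four axis-lines and are hence mass-limited, while each ``corner-quadrant'' point dominates a corner, and one checks in each type that $\sum_i\mu_i=1.99$ with all $\mu_i\le 1/2$ forces some outward half-plane to carry mass $\ge 1/2$, contradicting non-interiority. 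Throughout, the degenerate case in which a corner lies on the \emph{boundary} of $\Sigma^1_\R(A)$ — an interval of length exactly $1$ with lattice endpoints, exactly as handled in Lemmas~\ref{Lem:DPrimitive} and~\ref{Lem:Dnoline} — must be excluded separately in order to upgrade membership in $\Sigma^1_\R(A)$ to genuine interiority.
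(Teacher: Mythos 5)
There is a genuine gap: your argument is a framework plus an announcement, not a proof. The decisive step --- showing that $1.99$ units of non-corner mass, subject to the multiplicity caps and line constraints, cannot keep all four outward half-planes below mass $\frac{1}{2}$ --- is exactly where the entire difficulty of the lemma lives, and you explicitly defer it (``I expect to finish by\dots one checks in each type that\dots''). Worse, the framework as set up has no slack with which to force a contradiction: non-interiority of each corner gives you one open half-plane of mass $<\frac12$, so the union of the four half-planes carries mass $<2$, while the non-corner mass is exactly $3.99-2=1.99<2$. Your intermediate observation that ``at least $1.99$ of the mass lies in $\Pi$'' is vacuous, since the four corners alone contribute mass $2$ and lie in $\Pi$. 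So the half-plane bookkeeping alone can never close the argument; everything depends on the finer analysis you did not carry out. In addition, you never confine the support of $A$ to a bounded region, so your claimed reduction to ``finitely many combinatorial types'' is unjustified: hull vertices in the four corner quadrants can a priori lie arbitrarily far from the unit square, and the adversary also gets to choose the four normal directions, so the case structure is not finite as stated.

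For comparison, the paper's proof closes both holes at the outset and is much more direct. First, for any point $(u,v)$ of positive multiplicity it applies Lemma~\ref{Lem:DReplace} to $(u,v)$ and its residue $(u\bmod 2, v\bmod 2)$ (one of the four corners): the midpoint is a lattice point which must then have multiplicity $\frac12$, hence must itself be one of the four corners, which traps the entire support in a small box around the unit square --- this is the bounding step your proposal lacks. Second, instead of half-planes of mass $<\frac12$, it proves the much stronger bound that the mass in each outer corner region (e.g.\ $\mu((-1,0))+\mu((-1,-1))+\mu((0,-1))$ for the corner $(0,0)$) is at most $\frac14$, by a short case analysis showing that otherwise some corner becomes interior. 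Summing, $|A|\le 2+4\cdot\frac14=3<3.99$, a contradiction with genuine numerical room to spare. If you want to salvage your approach, you would need to replace the $<\frac12$ half-plane bounds by quantitative bounds of this $\frac14$ type, and you would still need the mod-$2$ midpoint argument (or a substitute) to make the configuration space finite.
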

\begin{proof}
If $(u,v)$ occurs with positive multiplicity, then we can apply Lemma~\ref{Lem:DReplace} to $(u,v)$ and $(u\bmod 2, v\bmod 2)$ to find that $m=\left(\frac{u+(u\bmod 2)}{2}, \frac{v+(v\bmod v)}{2}\right)$ has multiplicity $\frac{1}{2}$. As there are only 4 points with multiplicity $\frac{1}{2}$, $m$ has to be one of them, and we see that $A$ is supported on$\{(x,y):-1\leq x,y\leq 3\}$.

Suppose that $\mu((-1,0))+2\mu((-1,-1))+\mu((0, -1))>\frac{1}{4}$. Then $\Sigma_\R^1(A)$ contains a point in $\{(x,y):x,y\leq 0\}$ different from $(0,0)$. If this point has both coordinates different from 0, then $(0,0)$ is an interior point of $\Sigma_\R^1$. If we cannot arrange for this point to have both coordinates different from 0, then without loss of generality we may assume that $A$ contains no element on the line $\{(x,y):y=-1\}$, and $(-1,0)$ with multiplicity $>\frac{1}{4}$. Then there are $\geq0.99$ points on the line $\{(x,y):y=2\}$. If $2\mu((-1,2))+\mu((0,2))\geq\frac{1}{4}$, then $(0,1)$ is an interior point, hence, $\mu((1,2))+\mu((2,2))\geq 0.74$. But then $(1,1)$ is an interior point.

Hence, we obtain in particular $\mu((-1,0))+\mu((-1,-1))+\mu((0, -1))\leq\frac{1}{4}$. Applying the same argument to the other points of multiplicity $\frac{1}{2}$ we obtain $\mu(A\setminus\{(0,0),(1,0), (0,1),(1,1)\})\leq 1$, thus, $|A|\leq 3$, which is impossible.
\end{proof}

\begin{Lem}
\label{Lem:no 3.5}
Suppose that $(0,0), (1,0)$, and $(0,1)$ occur with multiplicity $\frac{1}{2}$, and $(1,1)$ occurs with multiplicity strictly between $0$ and $\frac{1}{2}$ . Then one of $(0,0), (1,0), (0,1)$ is an interior point of $\Sigma_\R^1(A)$.
\end{Lem}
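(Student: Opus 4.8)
\emph{Proof sketch.} The plan is to first pin down the support of $A$, then reduce to three local mass inequalities, one per corner of multiplicity $\frac12$, and finally aggregate them against $|A|=3.99$.

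By Lemma~\ref{Lem:Dnoline} I may assume no line carries $\frac32$ points, since otherwise we are done. With this in force I apply the replacement Lemma~\ref{Lem:DReplace} to every point $(u,v)$ of positive multiplicity together with its reduction $r=(u\bmod 2,v\bmod 2)$: the midpoint $m=\frac12((u,v)+r)$ is a lattice point lying on the segment from $(u,v)$ to $r$, which is an edge of a genuine triangle formed with any third non-collinear point of positive multiplicity, so Lemma~\ref{Lem:DReplace} forces $\mu(m)=\frac12$ (the $\frac32$-line alternative being excluded). Hence $m\in\{(0,0),(1,0),(0,1)\}$ whenever $(u,v)\notin\{(0,0),(1,0),(0,1),(1,1)\}$, which confines the support to the explicit thirteen-point set clustered around the triangle $T=\mathrm{conv}\{(0,0),(1,0),(0,1)\}$: the three corners, the point $(1,1)$, and the nine outer points $(-1,-1),(-1,0),(0,-1)$; $(2,0),(2,-1),(1,-1)$; $(0,2),(-1,2),(-1,1)$. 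The same argument shows that no point other than the three corners has multiplicity $\frac12$ (this is where the hypothesis $\mu((1,1))<\frac12$ enters), a fact I will lean on repeatedly. I would also record that at most one of $(2,0),(0,2)$ has positive multiplicity: if both did, $(1,1)$ would lie on the segment between them, and Lemma~\ref{Lem:DReplace} would again force $\mu((1,1))=\frac12$.

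Next I group the nine outer points into three cones, one opposite each corner: $K_{(0,0)}=\{(-1,-1),(-1,0),(0,-1)\}$, $K_{(1,0)}=\{(2,0),(2,-1),(1,-1)\}$, and $K_{(0,1)}=\{(0,2),(-1,2),(-1,1)\}$. The key local claim is that if the $A$-mass in $K_c$ is at least $\frac12$ and is not pinned to a single coordinate line through $c$, then $c$ is an interior point of $\Sigma_\R^1(A)$. To prove this I exhibit three length-$1$ representable points with $c$ strictly inside their convex hull: two ``inner'' points, obtained by averaging $c$ (with weight $\frac12$) against each of the two neighbouring corners of $T$ (for $c=(1,0)$ these are $\frac12((1,0)+(0,0))$ and $\frac12((1,0)+(0,1))$), and one ``outer'' point, the midpoint of $c$ with a weight-$\frac12$ convex combination drawn from $K_c$. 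A direct coordinate check shows $c$ is then a strictly positive convex combination of the three; the only thing needed is that the $K_c$-combination can be steered strictly into the open cone beyond $c$, which is possible precisely because the total mass in $K_c$ is $\ge\frac12$ while no single outer point carries mass $\frac12$.

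Granting the claim in the non-degenerate case, if none of the three corners were interior then each cone carries mass $<\frac12$, the nine outer points carry total mass $<\frac32$, and hence $|A|<\frac32+\frac12+\frac32=\frac72<3.99$, a contradiction. The hard part will be the degenerate configurations, where the mass of some $K_c$ exceeds $\frac12$ but is concentrated on a line through $c$ (for instance almost all of $K_{(1,0)}$ sitting on $(1,-1)$, directly below $(1,0)$), so that every admissible outer combination collapses onto that line and the surrounding triangle degenerates. Here two facts do the work: first, concentration on a line through $c$ is capped, since that line would otherwise carry $\ge\frac32$ points, which the no-$\frac32$-line hypothesis forbids; second — and this is the crux — the total mass $3.99$ is too large for all three cones to be simultaneously degenerate, so after subtracting the capped mass a pigeonhole argument produces a heavy, non-degenerate cone at one of the remaining corners, which is then interior by the claim. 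Carrying out this simultaneous-degeneracy bookkeeping, exactly in the spirit of the proof of Lemma~\ref{Lem:no four} and using the positivity of $\mu((1,1))$ together with the at-most-one-of-$(2,0),(0,2)$ reduction, is where the real effort lies; the final mass count then closes the argument with room to spare. $\qquad\square$
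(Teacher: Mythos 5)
Your opening reduction is fine: the thirteen-point support via the midpoint/\ref{Lem:DReplace} argument and the observation about $(2,0),(0,2)$ match the paper's own first step. But the proof built on it has a fatal flaw: the \emph{key local claim is false}. Take $c=(1,0)$, cone $K_c=\{(2,0),(2,-1),(1,-1)\}$, and put the cone mass entirely on $(1,-1)$ and $(2,-1)$, say $\mu((1,-1))=0.49$, $\mu((2,-1))=0.01$, $\mu((2,0))=0$, with $\mu((1,1))=0.2$. This mass is $\geq\frac12$ and is \emph{not} pinned to a single coordinate line, so your claim asserts $(1,0)$ is interior. It is not: the obstruction is the non-coordinate line $x+y=1$ through $(1,0)$ and $(0,1)$; the point $(2,-1)$ lies \emph{on} that line and $(1,-1)$ strictly on its inner side, so neither can push $\Sigma_\R^1(A)$ past it. Indeed, checking your own triangle construction, $(1,0)$ lies strictly inside the triangle with vertices $(\frac12,0)$, $(\frac12,\frac12)$, $\frac12(1,0)+\frac12 q$ if and only if $q_y<0$ \emph{and} $q_x+q_y>1$, and the second condition forces positive weight on $(2,0)$. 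The failure is not an artifact of the construction: completing the above to an admissible sequence of total mass $3.99$ (e.g.\ $\mu((-1,1))=0.49$, $\mu((-1,2))=0.48$, $\mu((0,2))=0.29$, $\mu((-1,0))=0.3$, $\mu((-1,-1))=0.22$, $\mu((0,-1))=0.01$; all line sums stay below $\frac32$), a direct optimization shows every point of $\Sigma_\R^1(A)$ on the line $y=0$ has $x\leq 0.98$, so $(1,0)$ is not even in $\Sigma_\R^1(A)$. The lemma survives in this configuration only because $(0,1)$ is interior, which your cone-by-cone criterion cannot detect: interiority of a corner is a global matter (it depends on $\mu((1,1))$ and on mass near the other corners), not a property of its own cone.

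The second gap compounds the first: you explicitly defer the degenerate configurations to unspecified ``simultaneous-degeneracy bookkeeping,'' conceding that this is ``where the real effort lies.'' Since exactly those configurations are the ones your local claim cannot handle (and, as shown above, the claim is wrong even outside the cases you flag as degenerate), the proposal omits the core of the proof; the aggregate bound $|A|<\frac72$ is only established in the non-degenerate case, so the contradiction with $|A|=3.99$ is never reached. For comparison, the paper does not use an interiority dichotomy per cone at all: it proves unconditional mass bounds, namely $\mu((-1,-1))+\mu((-1,0))+\mu((0,-1))\leq\frac14$ (as in Lemma~\ref{Lem:no four}) and $\mu((1,-1))+\mu((2,-1))+\mu((2,0))\leq\frac34$, the latter via the case split that $\mu((1,-1))+\mu((2,0))>\frac14$ makes $(1,0)$ interior unless $\mu((1,-1))=\mu((2,-1))=0$ (in which case the cone carries only $\mu((2,0))<\frac12$), plus the symmetric bound at $(0,1)$ and the core bound $\mu((0,0))+\mu((1,0))+\mu((0,1))+\mu((1,1))<2$, giving $|A|<3.75<3.99$. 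If you want to salvage your route, the dividing line must be drawn at the two lines through $c$ spanned by the other two multiplicity-$\frac12$ corners (for $c=(1,0)$: the lines $y=0$ and $x+y=1$), not at the coordinate lines, and the resulting degenerate cases must actually be worked out.
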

\begin{proof}
As in Lemma~\ref{Lem:no four} we see that $A$ is supported on$\{(x,y):-1\leq x,y\leq 3\}$, and that the total multiplicity of points in the region $\{(x,y):x,y\leq 0\}$ different from $(0,0)$ is $\leq\frac{1}{4}$. 

If $\mu((1,-1))+\mu((2,0))>\frac{1}{4}$, then $\Sigma_\R^1(A)$ contains $(t,0)$ for some $t>1$. Then $(1,0)$ is an interior point of $\Sigma_\R^1(A)$, unless $\mu((1,-1))=\mu((2,-1))=0$. So $\mu((1,-1))+\mu((2,-1))+\mu((2,0))$ either equals $\mu((2,0))<\frac{1}{2}$, or is bounded by $\mu((1,-1))+\frac{1}{4}\leq\frac{3}{4}$. Hence, we obtain
\begin{multline*}
|A| = \underbrace{\mu((0,0))+\mu((1,0))+\mu((0,1))+\mu((1,1))}_{<2} + \underbrace{\mu((1,-1))+\mu((2,-1))+\mu((2,0))}_{\leq\frac{3}{4}}\\
 + \underbrace{\mu((-1,1))+\mu((-1,2))+\mu((0,2))}_{\leq\frac{3}{4}} + \underbrace{\mu((-1,-1))+\mu((-1,0))+\mu((0,-1))}_{\leq\frac{1}{4}}\\
  <3.75,
\end{multline*}
a contradiction.
\end{proof}

We can now finish the proof of Proposition~\ref{Prop:Dreal}. If $\mu((-1,1))>0$, we can transform $A$ to the situation dealt with in Lemma~\ref{Lem:no 3.5}, and similarly if $\mu((1,-1))>0$. As before we have that $A$ is supported on $\{(x,y):-1\leq x,y\leq 2\}$. As $\mu((1,1))=0$, we have also  $\mu((1,2))=\mu((2,1))=\mu(2,2))=0$, and at most one of $\mu((2,0))$ and $\mu((0,2))$ is different from 0.Finally, we have $\mu((-1,0))+\mu((-1,-1))+\mu((0,-1))\leq\frac{1}{4}$, or $(0,0)$ is an interior point of $\Sigma_\R^1(A)$. Hence,
\begin{multline*}
|A|\leq \underbrace{\mu((0,0))+\mu((1,0))+\mu((0,1))}_{=\frac{3}{2}} + \underbrace{\mu((2,0))+\mu((0,2))}_{\leq\frac{1}{2}}\\
 + \underbrace{\mu((-1,0))+\mu((-1,-1))+\mu((0,-1))}_{\leq\frac{1}{4}} \leq \frac{9}{4},
\end{multline*}
which gives a contradiction.

Hence, the proof of Proposition~\ref{Prop:Dreal} is complete.

\section{Conclusion of the proof}

Let $A$ be a sequence of length $4p-4$, and suppose that the maximal multiplicity of an element in $A$ is $\leq\frac{p}{2}$. Then either there exists a linear transform such that all but $0.003p$ elements of $A$ have coefficients in $[-C, C]^2$ for some absolute constant $C$, or our claim follows from Section~\ref{sec:Fourier}. Removing all elements outside $[-C, C]^2$ we obtain a sequence $A_1$ of length $\geq 3.997p$ supported in $[-C, C]$. Removing some further elements we obtain a sequence $A_2$ such that all elements have either multiplicity 0 or multiplicity $\geq \frac{0.004 p}{(2C+1)^2}$. We now interpret $A_2$ as a sequence over $\Z^2$. If $A_2$ does not generate $\Z^2$ as an affine lattice, we can apply some linear transformation such that the image generates $\Z^2$, while the elements are still bounded, so without loss of generality we may assume that $A_2$ does generate $\Z^2$. Pick a generating set $D$ of $\Z^2$ in the support of $A$ and let $A_3$ be the subsequence obtained by removing $\lfloor \frac{0.003p}{(2C+1)^2}\rfloor$ copies of each element in $D$. 

Now form the real sequence $\overline{A_3}$ which has the same support as $A_3$, but all multiplicities are divided by $p$. Then $|\overline{A_3}|\geq 3.99$, and no element has multiplicity $>\frac{1}{2}$. It follows from Section~\ref{sec:geo} that there exists some lattice point $x$ and a constant $c>0$ such that $B_c(x)\subseteq\Sigma_\R^1(\overline{A_3})$. Clearly, the coordinates of all elements in $\Sigma_\R^1(\overline{A_3})$ are at most $C$. Therefore,  multiplying all multiplicities in a representation of $y\in B_c(x)$ by $1-\frac{2c}{3C}$ we obtain a representation of $(1-\frac{2c}{3C})y$ as the sum of a subsequence of length $1-\frac{2c}{3C}$. We conclude that $B_{2c/3}(x)\subseteq \Sigma_R^{1-\frac{2c}{3C}}(\overline{A_3})$.

Translating this back to the original setting we obtain that for every lattice point $y\in B_{2cp/3}(px)$ there exist real numbers $t_i$, $1\leq i\leq k$ and elements $a_i$ in the support of $A_3$, such that $\sum_{i=1}^k t_i a_i=y$, and $\sum_{i=1}^k t_i=\left(1-\frac{2c}{3C}\right)p$. We can pick integers  $\widetilde{t_i}$, such that $|t_i-\widetilde{t_i}|<1$ and $\sum_{i=1}^k\widetilde{t_i} = \lceil\left(1-\frac{2c}{3C}\right)p\rceil=:N$. Then $\widetilde{y} = \sum_{i=1}^k \widetilde{t_i} a_i$ satisfies 
\[
|y-\widetilde{y}| = \left|\sum_{i=1}^k (t_i-\widetilde{t_i})a_i\right| \leq \sum_{i=1}^k |t_i-\widetilde{t_i}|\cdot |a_i| \leq \sum_{i=1}^k C \leq C(2C+1)^2\leq 5C^3.
\]
We conclude that for every lattice point $y\in  B_{2cp/3}(px)$ there exists a lattice point $\widetilde{y}$ with $|y-\widetilde{y}|\leq 5C^3$, such that $\widetilde{y}$ can be represented as the sum of a subsequence of $A_3$ of length $N$. 

As $D$ is a generating set of $\Z^2$, and all elements of $D$ have all coordinates bounded by $C$, we have that $A_4 = D^{\lfloor\frac{0.003}{(2C+1)^2}\rfloor}$ is a sequence such that the set of all elements that are sums of subsequences of length $p-N = \frac{pc}{2C}+\mathcal{O}(1)$ equals the set of all lattice points in $\Sigma_\R^N(A_4)$, apart from a certain set of lattice points with distance at most $C^2$ from the boundary of $\Sigma_\R^N(A_4)$. We conclude that if $p$ is sufficiently large, there exist integers $u, v$, such that $|u|, |v|\leq (p-N)C =\frac{cp}{3C}+\mathcal{O}(1)$ and $(u+i, v+j)\in\Sigma^N(A_4)$. It now follows that $\Sigma^p(A)\supseteq\Sigma^{p-N}(A_3)+\Sigma^N(A_4)$ contains a lattice point with both coordinates divisible by $p$, and the proof is complete.

\end{document}